\numberwithin{equation}{section}
\author{Dennis Presotto \and Michel Van den Bergh}
\title[Noncommutative versions of classical birational transformations.]{Noncommutative versions of some classical birational transformations.}
\let\cal\mathcal
\def\Ascr{{\cal A}}
\def\Dscr{{\cal D}}
\def\Escr{{\cal E}}
\def\Fscr{{\cal F}}
\def\Iscr{{\cal I}}
\def\Jscr{{\cal J}}
\def\Lscr{{\cal L}}
\def\Mscr{{\cal M}}
\def\Nscr{{\cal N}}
\def\Oscr{{\cal O}}
\let\blb\mathbb
\def \PP{{\blb P}}
\def \ZZ{{\blb Z}}
\def \NN{{\blb N}}
\def\id{\text{id}}
\def\Id{\operatorname{id}}
\def\Bimod{\operatorname{Bimod}}
\def\Mod{\operatorname{Mod}}
\def\Gr{\operatorname{Gr}}
\def\QGr{\operatorname{QGr}}
\def\Qch{\operatorname{Qch}}
\def\Ext{\operatorname {Ext}}
\def\Hom{\operatorname {Hom}}
\def\im{\operatorname {im}}
\def\coker{\operatorname {coker}}
\def\ker{\operatorname {ker}}
\def\Tor{\operatorname {Tor}}
\def\id{{\operatorname {id}}}
\def\Pic{\operatorname {Pic}}
\def\r{\rightarrow}
\DeclareMathOperator{\Proj}{Proj}
\DeclareMathOperator{\Alg}{Alg}
\DeclareMathOperator{\Tors}{Tors}
\newtheorem{lemma}{Lemma}[section]
\newtheorem{theorem}[lemma]{Theorem}
\theoremstyle{definition}
\newtheorem{definition}[lemma]{Definition}
\theoremstyle{remark}
\newtheorem{remark}[lemma]{Remark}
\newdimen\uboxsep \uboxsep=1ex
\def\uboxn#1{\vtop to 0pt{\hrule height 0pt depth 0pt\vskip\uboxsep
\hbox to 0pt{\hss #1\hss}\vss}}
\def\uboxs#1{\vbox to 0pt{\vss\hbox to 0pt{\hss #1\hss}
\vskip\uboxsep\hrule height 0pt depth 0pt}}
\newcommand\blfootnote[1]{%
  \begingroup
  \renewcommand\thefootnote{}\footnote{#1}%
  \addtocounter{footnote}{-1}%
  \endgroup
}
\def\BIMOD{\operatorname{BIMOD}}
\def\Frac{\operatorname{Frac}}
\begin{document}
\maketitle
\blfootnote{The first author is an aspirant of the FWO, the second author is a senior researcher of the FWO.}
\begin{abstract}
In this paper we generalize some classical birational 
transformations to the non-commutative case. In particular
we show that 3-dimensional quadratic Sklyanin algebras
(non-commutative projective planes) and 3-dimensional cubic Sklyanin
algebras (non-commutative quadrics) have the same function
field. In the same vein we construct an analogue
of the Cremona transform for non-commutative projective planes. 
%  In this paper we construct an AS-regular sub-$\ZZ$-algebra of
%  $\check{A}^{(2)}$ where $A$ is an AS-regular algebra of dimension 3.
\end{abstract}

\tableofcontents

\section{Introduction}
Below $k$ is an algebraically closed field. 
Artin-Schelter regular algebras were introduced in
\cite{artinschelter} and subsequently classified in dimension three \cite{artinschelter, ATV1, stephenson}.
%Although
%\cite{stephenson} considers more general cases, we will assume an
%AS-regular algebra $A$ to be generated in degree one. 
Throughout we will only consider three-dimensional AS-regular algebras generated in degree one.
For such algebras $A$
there are two possibilities:
\begin{enumerate}
\item $A$ is generated by three elements satisfying three quadratic
  relations (the ``quadratic case'').  In this case $A$ has Hilbert
  series $1/(1-t)^3$, i.e.\ the same Hilbert series as a polynomial
  ring in three variables. %It has proven advantageous to think of such
%  $A$ as homogeneous coordinate rings of ``non-commutative  
%projective planes''.
\item $A$ is generated by two elements satisfying two cubic relations
(the ``cubic case''). In this case $A$ has Hilbert series $1/(1-t)^2(1-t^2)$.
%and it should be viewed as the homogeneous
%coordinate ring  of a non-commutative quadric. %The rationale
%for this is explained in \cite{VdB38}. \marginpar{XXX Survey}
\end{enumerate}
For use below we define $(r,s)$ to be respectively the number of generators of $A$ and the degrees of the 
relations. Thus $(r,s)=(3,2)$ or $(2,3)$ depending on whether $A$ is quadratic 
or cubic. 

If $B=k+B_1+B_2+\dots$ is an $\NN$-graded ring satisfying suitable conditions 
then we can associate a non-commutative scheme $\Proj B$ to it whose 
category of quasi-coherent sheaves is defined to be $\QGr(B)\overset{\text{def}}{=} \Gr(B)/\Tors(B)$
where $\Gr(B)$ is the category of right $B$-modules   and $\Tors(B)$
is the category graded right $B$-modules that have locally right bounded grading \cite{artinzhang}.
When $A$ is a quadratic three-dimensional AS-regular algebra then $\Proj A$ may be thought off
as a non-commutative plane. Similarly if $A$ is cubic then $\Proj A$ may be viewed as a non-commutative quadric. The rationale
for this is explained in \cite{VdB38}. 

The
classification of three-dimensional AS-regular algebras $A$
is in terms of suitable geometric data $(Y,\sigma,\Lscr)$ where $Y$ is
a $k$-scheme, $\sigma$ is automorphism of $Y$ and $\Lscr$ is
a line bundle on $Y$.

%If $X=\Proj A$ then using suitable language
%from non-commutative projective geometry (see \cite{VdB19,XXXsurvey}) there is a
%closed immersion $Y \subset X$. The natural shift functor on $\Qch(X)$
%restricts to the functor $\sigma_\ast(\Lscr\otimes_Y-)$ on $\Qch(Y)$. In most cases
%$Y$ is a curve of arithmetic genus 1 although it can also be $\PP^2$ or $\PP^1\times\PP^1$. 
%In the former case $Y$ is a divisor in $X$
%\cite{XXXMori} whereas in the latter cases we have $Y=X$  and thus $X$ is in fact commutative.
%
% The
%classification of AS-regular algebra
%is in terms of geometric data $(Y,\sigma,\Lscr)$ where $Y$ is
%a $k$-scheme, $\sigma$ is automorphism of $Y$ and $\Lscr$ is
%a linebundle on $Y$.
More precisely:
in the quadratic case $Y$ is either $\PP^2$ (the ``linear case'') 
or $Y$ is embedded as a
divisor of degree 3 in $\PP^2$  (the ``elliptic case'')
 and $\Lscr$ is the restriction of $\Oscr_{\PP^2}(1)$. In the cubic case
$Y$ is either $\PP^1 \times \PP^1$ (the ``linear case'')
or $Y$ is embedded as a
divisor of bidegree (2,2) in $\PP^1 \times \PP^1$ (the ``elliptic case'')
and $\Lscr$ is the restriction of $\Oscr_{\PP^1\times \PP^1}(1,0)$.
The geometric data must also satisfy an additional numerical condition which we will not discuss
here. 

Starting from the geometric data $(Y,\sigma,\Lscr)$ we construct a so-called ``twisted homogeneous
coordinate ring'' $B=B(Y,\sigma,\Lscr)$.
 It is an $\NN$-graded ring such that 
\begin{equation}
\label{twisted}
B_n = \Gamma(Y,\Lscr \otimes
\Lscr^{\sigma} \otimes \ldots \otimes \Lscr^{\sigma^{n-1}})
\end{equation}
with product $a\cdot b=a\otimes b^{\sigma^n}$ for $|a|=n$. The corresponding
AS-regular algebra $A=A(Y,\sigma,\Lscr)$ is obtained from $B$ by dropping all relations
in degree $>s$. By virtue of the construction there is a graded surjective $k$-algebra
homomorphism $A\r B$ and this is an isomorphism in the linear case
and it has a kernel generated by a normal element $g$ in degree $s+1$ in the elliptic case.

According to \cite{AV} there is an equivalence of categories $\QGr(B) \cong \Qch(Y)$. In our current language this can be written
as 
\[
\Proj B\cong Y
\]
So  the non-commutative scheme $X=\Proj A$ contains the commutative scheme $Y$ 
(via the surjection $A\r B$). In the linear case $X=Y$, and in the quadratic case
$Y$ is a so-called ``divisor'' in $X$ \cite[Section 3.6]{VdB19}.

If $Y$ is a smooth elliptic curve, $\sigma$ is a translation such that $\sigma^{s+1}\neq \id$ 
and $\Lscr$ is a line bundle of degree $r$
then we
call the corresponding AS-regular algebra a \emph{Sklyanin algebra}. In that case
the normal element $g$ is actually central.
Since
any two line bundles of the same degree on a smooth elliptic curve are related
by a translation, which necessarily commutates with $\sigma$, it is easy to see %(e.g.\ \cite{Tate})
that the resulting Sklyanin algebra depends up to isomorphism only on $(E,\sigma)$. So we sometimes
drop $\Lscr$ from the notation.
Furthermore $\Proj A$ does not change if we compose $\sigma$ with a translation
by a point of order $s+1$ (See for example \cite[\S 8]{ATV2}). In other words $\Proj A$ depends only on $\sigma^{s+1}$.

A three-dimensional AS-regular algebra $A$ is a noetherian domain and in particular 
it has a graded field of fractions $\Frac(A)$ in which we invert all non-zero
homogeneous elements of $A$. The part of degree zero $\Frac_0(A)$ of $\Frac(A)$ 
will be called the \emph{function field of $\Proj A$}.

In this note we prove the following result announced in \cite{VdBSt}. A similar result by Rogalski-Sierra-Stafford was announced in \cite{Sierratalk}.
\begin{theorem} \label{mainth1} If $A$, $A'$ are a cubic and a quadratic
  Sklyanin algebra respectively with
  geometric data $(Y,\sigma)$ and $(Y,\psi)$ such that $\sigma^3=\psi^4$. Then $\Proj A$
  and $\Proj A'$ have the same function field.
\end{theorem}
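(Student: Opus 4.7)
The plan is to construct a common noncommutative intermediate surface $\widetilde X$ that blows down to both $\Proj A$ and $\Proj A'$, realizing the classical birational equivalence between the quadric $\PP^1\times\PP^1$ and the plane $\PP^2$ at the noncommutative level. Since blow-downs preserve function fields, this would give $\Frac_0(A)\cong \Frac_0(\widetilde X)\cong \Frac_0(A')$.

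First I would reduce to an algebraic comparison of two localized rings. Because $g\in A_4$ and $g'\in A'_3$ are central, the localizations split as graded rings $A[g^{-1}]\cong A[g^{-1}]_0[g,g^{-1}]$ and $A'[(g')^{-1}]\cong A'[(g')^{-1}]_0[g',(g')^{-1}]$, so $\Frac_0(A)=\Frac(A[g^{-1}]_0)$ and $\Frac_0(A')=\Frac(A'[(g')^{-1}]_0)$. It therefore suffices to exhibit a common quotient division ring for the two degree-zero algebras, which geometrically amounts to a birational equivalence of $\Proj A$ and $\Proj A'$.

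Next, I would construct $\widetilde X$ as a noncommutative blow-up in the sense of Van den Bergh. Starting from the noncommutative quadric $\Proj A$, choose a point $p\in Y$ on the embedded elliptic curve and blow it up. Classically, the quadric blown up at a point carries three $(-1)$-curves: the exceptional divisor and the proper transforms of the two rulings through $p$; contracting the latter two yields $\PP^2$. One expects the same picture in the noncommutative setting: identify two $(-1)$-curves on $\widetilde X$ corresponding to the noncommutative rulings and contract them to obtain a noncommutative plane $\Proj A''$, where $A''$ is a quadratic Sklyanin algebra whose geometric data $(Y,\psi')$ is determined by $(Y,\sigma)$ and the choice of $p$. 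Careful bookkeeping of the twisted line bundles on $Y$ through the blow-up and blow-down forces the compatibility $\sigma^{3}=(\psi')^{4}$ between the translations. Combined with the hypothesis $\sigma^3=\psi^4$ and the fact that $\Proj A'$ depends only on $\psi^{3}$, this yields $\Proj A'\cong \Proj A''$ and hence the desired identification of function fields.

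The main obstacle is the rigorous handling of the noncommutative blow-ups and blow-downs at points of the embedded elliptic curve $Y$, especially the identification and simultaneous contraction of two $(-1)$-curves. The most delicate step is to verify that the resulting noncommutative surface really is a noncommutative plane $\Proj A''$ and that the translation $\psi'$ arising from the construction satisfies $(\psi')^{4}=\sigma^{3}$; it is precisely this numerical relation, natural once one tracks the degree shifts in the blow-up and blow-down procedures, that the hypothesis of the theorem is designed to encode.
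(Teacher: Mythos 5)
Your classical picture is the right one and is in fact the paper's stated motivation, but the step your argument actually rests on --- contracting the two noncommutative $(-1)$-curves on the blow-up $\widetilde X$ and invoking ``blow-downs preserve function fields'' --- is precisely the technology that does not exist here, and the paper is structured specifically to avoid it. There is no general contraction theorem for noncommutative surfaces; $\widetilde X$ is only a Grothendieck category ($\Proj$ of a bimodule algebra), and you have not defined its function field, let alone shown invariance under blow-up and blow-down. The paper short-circuits the contraction by directly constructing the homogeneous coordinate ring of the target plane as a sub-$\ZZ$-algebra $D\subset \check{A}^{(2)}$ of the (Veronese of the) cubic algebra, with $D_{m,n}=\Gamma\bigl(X,\,o_X(-2m)\otimes m_{\tau^{-m}d}\cdots m_{\tau^{-n+1}d}\otimes o_X(2n)\bigr)$ --- i.e.\ the linear system of ``quadrics'' through the moving point --- and then proves by hand (vanishing lemmas, Hilbert function count, surjectivity onto the twisted homogeneous coordinate ring $D_Y$) that $D$ is a quadratic AS-regular $\ZZ$-algebra with geometric data $(Y,\Lscr_0,\Lscr_1)$, hence $D\cong\check{A}'$. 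Note that $\ZZ$-algebras are unavoidable: there is no embedding $A'\hookrightarrow A^{(2)}$ of graded rings, so a construction phrased purely in terms of graded algebras and central localizations cannot even be set up.

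Even granting the identification $\check{A}'\cong D\hookrightarrow\check{A}$, the function-field comparison is not automatic and your proposal supplies no mechanism for it. The paper localizes at all nonzero homogeneous elements (graded Goldie), obtains an injective map $\Frac_0(A')\to\Frac_0(A)$ of division rings, and proves surjectivity by a concrete growth estimate: for $N\gg 0$ the space $\Gamma(X,o_X(2N)\otimes_X\Iscr)$ is nonzero because its codimension grows like $\tfrac{3N^2}{2}$ inside a space of dimension growing like $2N^2$, which produces a common right denominator $h$ placing any fraction from $\check{A}$ inside $\check{A}'$. Your reduction to $A[g^{-1}]_0$ does not connect to this (and the asserted splitting $A[g^{-1}]\cong A[g^{-1}]_0[g,g^{-1}]$ is not literally correct since $g$ has degree $s+1>1$, though the identification of $\Frac_0$ can be salvaged). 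In short: the missing ideas are (a) replacing the nonexistent blow-down by the explicit linear-system $\ZZ$-algebra $D$ together with the proof that it is AS-regular, and (b) the surjectivity argument for the induced map of function fields.
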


The proof of this result is geometric. In the commutative case the passage
from $\PP^1\times \PP^1$ to $\PP^2$ goes by blowing up a point $p$ and then contracting %\marginpar{XXX refer to blowup construction.}
the strict transforms of the two rulings through this point.  
One may short circuit this construction by considering a suitable
linear system on $\PP^1\times \PP^1$ with base point in $p$.
It is this construction that we generalize first. To do this we have to step outside the category of graded algebras
and  work in the slightly larger category of $\ZZ$-algebras (additive categories whose objects are indexed by $\ZZ$, see \S\ref{secremas} below).

 So what we will actually do is
the following: let $A$ be a cubic Sklyanin algebra and let $A^{(2)}$ be its
$2$-Veronese with the corresponding $\ZZ$-algebra being denoted  by $\check{A}^{(2)}$. Associated to a point $p\in Y$ we will construct a sub-$\ZZ$-algebra $D$ of $\check{A}^{(2)}$ 
which is 3-dimensional quadratic Artin-Schelter $\ZZ$-algebra in the sense of \cite{VdB38}. Again invoking \cite{VdB38} this $\ZZ$-algebra must correspond
to a 3-dimensional quadratic Artin-Schelter \emph{graded algebra} $A'$.
It will turn out that the geometric data of $A$ and $A'$ are related as in
 Theorem \ref{mainth1}. 
Note that  the use of $\ZZ$-algebras is essential here as there is no direct embedding $A' \hookrightarrow A^{(2)}$ 
of graded rings. 

Another classical birational transformation is the so-called ``Cremona transform''. 
It is obtained by blowing up the tree vertices of a triangle and then contracting
the sides. In this note we will also show that the Cremona transform has a non-commutative
version  and that it is yields an automorphism of the function field of a three-dimensional
quadratic Sklyanin algebra. The properties of this automorphism will be discussed elsewhere.

% instead
%of graded rings but otherwise everything goes smoothly. 

In \S\ref{secblowup} we explain how in the non-commutative case the approach via linear systems
is related to
the  blowup construction introduced %\marginpar{XXX Check if this paragraph is true.}
in \cite{VdB19}. 

\begin{remark}
A more ring-theoretic approach to blowups of noncommutative surfaces was taken by Rogalski-Sierra-Stafford in \cite{Noncblowup}. They also used this technique in their companion paper \cite{Classifyingorders} to classify certain orders in a generic 3-dimensional Sklyanin algebra.
\end{remark}

\medskip

\begin{remark} Cubic 3-dimensional Artin-Schelter regular algebras are
  a special case of the non-commutative quadrics introduced in
  \cite{VdB38}. Theorem \ref{mainth1} generalizes to such quadrics but
  the proof becomes slightly more technical. For this reason we have chosen to
  write down the proof of Theorem \ref{mainth1} separately.
\end{remark}

\begin{comment}
  This construction was generalized in \cite{bondalpolishchuk, VdB38}
  where AS-regular $\ZZ$-algebras were introduced. In the quadratic
  case these are classified by triples $(Y,\Lscr_0,\Lscr_1)$ and in
  the cubic case by quadruples $(Y,\Lscr_0,\Lscr_1,\Lscr_2)$. In both
  cases the $\Lscr_i$ give rise to a so called elliptic helix
  \cite{bondalpolishchuk}.

  Now given an AS-regular algebra $A$, we can turn it into an
  AS-regular $\ZZ$-algebra $\check{A}$, which is defined as
  $\check{A}_{i,j} = A_{j-i}$. It follows immediately that $\QGr(A) =
  \QGr(\check{A})$. Moreover \cite[Lemma 3.5]{VdB38} shows that
  $\QGr(A) \cong \QGr(A^{(2)})$ where $A^{(2)}$ is the 2-Veronese of
  $A$ ($A^{(2)}_n = A_{2n}$). Remark that $A^{(2)}$ and
  $\check{A}^{(2)} = \widecheck{A^{(2)} }$ will no longer be
  AS-regular. In this paper we will however construct a
  sub-$\ZZ$-algebra of $\check{A}^{(2)}$ which is again AS-regular of
  dimension 3.
\end{comment}
\section{Reminder on AS-regular $\ZZ$-algebras}
\label{secremas}
For background material on $\ZZ$-algebras see \cite{Sierra} and also  sections 3 and 4 of \cite{VdB38}. Recall
that a ($k$-)$\ZZ$-algebra is defined as a $k$-algebra $A$ (without
unit) with a decomposition $\displaystyle A = \bigoplus_{(m,n)
  \in \ZZ^2} A_{m,n}$ such that the multiplication satisfies
$A_{m,n}A_{n,j} \subset A_{m,j}$ and $A_{m,n}A_{i,j}=0$ if $n \neq
i$. Moreover we require the existence of local units $e_n\in A_{nn}$ satisfying $e_m x = x = x
e_n$ whenever $x \in A_{m,n}$. The category of $\ZZ$-algebras is
denoted by Alg($\ZZ$). Every graded $k$-algebra $A$ gives rise to a
$\ZZ$-algebra $\check{A}$ via $\check{A}_{m,n} = A_{n-m}$. 
Most graded notions have a natural $\ZZ$-algebra counterpart. 
For example we say that $A \in $ Alg($\ZZ)$ is positively graded if
$A_{m,n}=0$ whenever $m > n$. %As in the graded case we say $A$ is
%defined over $k$ if each $A_{m,n}$ is a $k$-vectorspace and
%multiplication with $k$ is compatible with multiplication in $A$ in
%the usual sense. 
A $\ZZ$-algebra over $k$ is said to be connected, if
it is positively graded, each $A_{m,n}$ is finite dimensional over $k$
and $A_{m,m} \cong k$ for all $m$.

If $A \in \Alg(\ZZ$) then we say $M$ is a graded right-$A$-module if
it is a module in the usual sense together with a decomposition $M =
\oplus_n M_n$ satisfying $M_m A_{m,n} \subset M_n$ and $M_m A_{i,n} =
0$ if $i \neq m$. We denote the category of graded $A$-modules by
Gr($A$). (Obviously $\Gr(A)=\Gr(\check{A})$ if $A$ is a graded ring.)
If $A$ is a connected $\ZZ$-algebra over $k$ we denote the graded
$A$-modules $P_{n,A} = e_n A$ and $S_{n,A} \cong k$ is the unique
simple quotient of $P_{n,A}$. % \marginpar{XXX Perhaps this is the
%  appropriate point to introduce $\QGr(A)$ for a $\ZZ$-algebra.}

\begin{definition}
A $\ZZ$-algebra $A$ over $k$ is said to be AS-regular if the following conditions are satified:
\begin{enumerate}
\item $A$ is connected
\item $\dim_k(A_{m,n})$ is bounded by a polynomial in $n-m$
\item The projective dimension of $S_{n,A}$ is finite and bounded by a number independent of $n$
\item $\displaystyle \forall n \in \NN: \sum_{i,j} \dim_k \left( \Ext_{\Gr(A)}^i(S_{j,A},P_{n,A}) \right) =1$
\end{enumerate}
\end{definition}
It is immediate that if a graded algebra $A$ is AS-regular in the sense of \cite{ATV1}, then $\check{A}$ is AS-regular in the above sense.

$\ZZ$-algebra analogues of three dimensional quadratic and cubic Artin-Schelter regular algebras were classified in \cite{VdB38} (following \cite{Bondal} in the quadratic case). 
We will describe the quadratic case as this is the only case we will need.
In this case the
  classification is in terms of triples $(Y,\Lscr_0,\Lscr_1)$ where
  $Y$ is either a (possibly singular, non-reduced) curve of arithmetic genus 1 (the ``elliptic
  case'') or $Y=\PP^2$ (the ``linear case'') and $\Lscr_0,\Lscr_1$ are line
  bundles of degree 3 on $Y$ such that $\Lscr_0\not\cong \Lscr_1$ in the elliptic 
case and $\Lscr_0=\Lscr_1=\Oscr_{\PP^2}(1)$ in the linear case. The triple must 
satisfy some other technical conditions which  are however vacuous in the case that
$Y$ is a smooth elliptic curve. 

To construct a $\ZZ$-algebra from this data we first introduce the ``elliptic helix'' $(\Lscr_i)_{i\in \ZZ}$
associated to $(\Lscr_0,\Lscr_1)$. This is a collection of line bundles satisfying
\[
\Lscr_i\otimes_{\Oscr_Y} \Lscr_{i+1}^{-2}\otimes_{\Oscr_Y} \Lscr_{i+2}=\Oscr_Y
\]
We put $V_i=H^0(Y,\Lscr_i)$ and 
\[
R_i=\ker(H^0(Y,\Lscr_i)\otimes H^0(Y,\Lscr_{i+1})\r H^0(Y,\Lscr_i\otimes_{\Oscr_Y} \Lscr_{i+1}))
\]
By definition the quadratic AS-regular $\ZZ$-algebra $A=A(Y,\Lscr_0,\Lscr_1)$ associated to $(Y,\Lscr_0,\Lscr_1)$ is generated by $V_i(=A_{i,i+1})$ subject to the relations $R_i\subset V_i\otimes V_{i+1}$.
The ``Hilbert function'' of $A$ is 
\begin{equation}
\dim A_{m,m+a}=
\begin{cases}
\frac{(a+1)(a+2)}{2}&\text{if $a\ge 0$}\\
0&\text{if $a<0$}
\end{cases}
%\label{ref-11-13} 
%\tag{$\star$}
\end{equation}

Using the line bundles $(\Lscr_i)_i$ be may define a $\ZZ$-algebra analogue  $B=B(Y,(\Lscr_i)_i)$ of a
twisted homogeneous coordinate ring (see the introduction) where
\[
B_{m,n}=\Gamma(Y,\Lscr_m \otimes \ldots \otimes \Lscr_{n-1} )
\]
If $A$ is the 3-dimensional AS-regular $\ZZ$-algebra constructed above
then there is a surjective map
\[
\phi:A\r B
\]
%In the linear case $\phi$ is an isomorphism. In the elliptic case the kernel of 
%$\phi$ is generated by elements $g_{i}\in A_{i,i+s+1}$ where $s=2$ in the quadratic case
%and $3$ in the cubic case. 
where $A$ is obtained from $B$ by dropping all relations in degree $(m,n)$ for $n\ge m+s+1$.

If $A$ is 3-dimensional quadratic AS-regular algebra with geometric
data $(Y,\sigma,\Lscr)$ then the elliptic helix corresponding to
$\check{A}$ is $(\Lscr^{\sigma^i})_i$. This follows immediately from
the construction of $A$ from $(Y,\sigma,\Lscr)$ as given in
\cite{ATV1} (see the introduction for an outline).
\section{Non-commutative geometry}
It will be convenient to use the formalism of non-commutative geometry
used in \cite{VdB19} which we summarize here. For more details we
refer to loc.\ cit.. See also \cite{Smith}. We will change the terminology and notations slightly to be
more compatible with current conventions.  

For us a non-commutative
scheme will be a Grothendieck category (i.e.\ an abelian category with
a generator and exact filtered colimits). To emphasize that we think
of non-commutative schemes as geometric objects, we denote them by
roman capitals $X$, $Y$, \dots. When we refer to the category represented by a non-commutative scheme $X$ then we write $\Qch(X)$. 
%Although one has 
%of course $X=\Qch(X)$ it is nonetheless very useful to make this
%notational distinction since it allows us to introduce other notations
%in a consistent way. For example we may  denote the noetherian
%objects in $\Qch(X)$ by $\coh(X)$. Furthermore we can absorb 
%additional structure into the symbol $X$ (such as a morphism to a base
%non-commutative scheme) which is not related to $\Qch(X)$. This would be awkward
%without the two different notations $X$ and $\Qch(X)$.

A morphism $\alpha:X\r Y$ between non-commutative schemes will be a right exact
functor $\alpha^\ast:\Qch(Y)\r \Qch(X)$ possessing a right adjoint
(denoted by $\alpha_\ast$). In this way the non-commutative schemes form a
category (more accurately: a two-category).

In this paper we often view commutative schemes as non-commutative schemes. 
More precisely
if $X$ is a commutative scheme, then 
$\Qch(X)$ will be the category of
quasi-coherent sheaves on $X$. It is proved in \cite{EnochEstrada} that
this is a Grothendieck category. Furthermore $X$ can be recovered from $\Qch(X)$
\cite{Brandenburg,Gabriel,rosenberg1}.
%Rosenberg in \cite{rosenberg1} has
%proved a reconstruction theorem which allows one to recover $X$ from
%$\Qch(X)$ (generalizing work of Gabriel in the noetherian
%case). %He has also announced that the  functor which assigns to a
%commutative scheme its associated quasi-scheme is fully
%faithful if we work over $\Spec\ZZ$.

If $X$ is a non-commutative scheme then we think of objects in $\Qch(X)$ as sheaves of right
modules on $X$. To define the analogue of a sheaf of algebras on $X$ however we
need a category of bimodules on $X$ (see \cite{Translation} for the case where $X$ is commutative).
The most obvious way to proceed is to define the category $\Bimod(X-Y)$ of $X-Y$-bimodules as the right exact functors
$\Qch(X)\r \Qch(Y)$  commuting with direct limits. The action of a bimodule $\Nscr$ on an object $\Mscr\in \Qch(X)$ is
written as $\Mscr\otimes_X \Nscr$.

If we define the ``tensor product'' of bimodules as composition then we can define
algebra objects on $X$ as algebra objects in the category of $X-X$-bimodules  and in this we may extend
much of the ordinary commutative formalism. For example the identity functor $\Qch(X)\r \Qch(X)$ 
is a natural analogue of the structure sheaf, and as such it will be denoted by $o_X$. If $\Ascr$ is an algebra object on
$X$ then it is routine to
define an abelian category $\Mod(\Ascr)$ of right-$\Ascr$-modules. We have $\Mod(o_X)=\Qch(X)$. Unraveling
all the definitions it turns out that $-\otimes_X-$ (the ``tensor product'' (composition) in the 
monoidal category $\Bimod(X-X)$) and $-\otimes_{o_X}-$ (the tensor product over the
algebra $o_X$) have the same meaning. We will use both notations, depending on the context.
%So for consistency we sometimes use the latter notation.

Unfortunately $\Bimod(X-Y)$ appears not to be an abelian category and this represents a  technical inconvenience
which is solved in \cite{VdB19} by embedding $\Bimod(X-Y)$ into a larger
category $\BIMOD(X-Y)$ consisting of ``weak bimodules''. The category $\BIMOD(X-Y)$ 
is opposite to the category of left exact functors $\Qch(Y)\r\Qch(X)$. Since
left exact functors are determined by their values on injectives, they
trivially form an abelian category. The category $\Bimod(X-Y)$ 
is the full category of $\BIMOD(X-Y)$ consisting of functors having
a left adjoint. Or equivalently: functors commuting with direct products.  

This being said, these technical complication will be invisible in this paper as all bimodules we encounter
will be in $\Bimod(X-Y)$.  %\marginpar{XXXX Perhaps explicitly state some relevant results here (e.g. \cite[Cor 5.5.6]{VdB19})? Unfortunately
%this requires moving up some stuff about finite length modules. I have no time to do it now.}

If $A$ be a graded algebra then the associated non-commutative scheme $X = \Proj A$
is defined by $\Qch(X)=\QGr(A)=\Gr(A)/\Tors(A)$, as discussed above. Note that
$\Proj A$ is only reasonably behaved when $A$ satisfies suitable homological conditions.
See \cite{artinzhang,Polishchuk}.
 We denote the
quotient functor $\Gr(A) \rightarrow \QGr(A)$ by $\pi$. The object $\pi A$ is
denoted by $\Oscr_X$. The ``shift by $n$'' functor $\Qch(X)$ is written as $\Mscr\mapsto \Mscr(n)$
and the corresponding bimodule is written as $o_X(n)$. In particular $o_X=o_X(0)$ and $\Oscr_X(n) = \Oscr \otimes_{o_X}o_X(n) = \pi(A(n))$.

\section{Construction of the subalgebra $D$ of $\check{A}^{(2)}$}
\label{secconstruction}
We devote the rest of the paper to the proof of Theorem \ref{mainth1} as well as the  construction
of the non-commutative Cremona transform. The treatment of both constructions will be almost entirely parallel. 
So let $A$ be a $3$-dimensional Sklyanin algebra, which may be either quadratic or cubic, and put $X=\Proj A$. 

As explained in the introduction 
(see also \cite{ATV1}) $A$ corresponds to a triple $(Y,\sigma,\Lscr)$, where $Y$ is smooth elliptic curve, $\sigma$ is a translation
and $\Lscr$ is a line bundle of degree $r$ on $Y$. The relation is given by the fact 
there is a regular central element $g \in A_{s+1}$  such
that $A/(g) =
B(Y,\sigma,\Lscr)$ where $B=B(Y,\sigma,\Lscr)$ is a so-called ``twisted homogeneous coordinate ring'' (see \eqref{twisted}). 

Using the resulting equivalence of categories (see the introduction and \cite{AV})
\[
\Proj B\cong Y
\]
we will write $o_Y(n)\in \Bimod(Y-Y)$ for the shift by $n$-functor on $\Proj B$. Then we
have
\[
o_Y(1)=\sigma_\ast(-\otimes_{\Oscr_Y} \Lscr)
\]
(the tensor product takes place in the category of sheaves of $Y$-modules).

The inclusion functor $\Qch(Y)\subset \Qch(X)$ (i.e.\ the functor dual to the graded
algebra morphism $A\r B$) has a left adjoint which
we denote by $-\otimes_{o_X} o_Y$ (on the level of graded modules
it corresponds to tensoring by $A/gA$).  Note that
in this way $o_Y$ is viewed as a $X-Y$-bimodule.

Below we will routinely regard a sheaf of $\Oscr_Y$-modules $\Nscr$ as an object in $\Bimod(Y-Y)$
by identifying it with the functor $-\otimes_{\Oscr_Y}\Nscr$. It is easy to see that the
resulting functor
\begin{equation}
\label{exact}
\Qch(Y)\r\Bimod(Y-Y)\subset \BIMOD(Y-Y)
\end{equation}
is fully faithful and exact. 

Similarly we regard an $Y-Y$-bimodule
$\Mscr$ as an $X-X$-bimodule by defining the corresponding
functor to be
\[
\Qch(X)\xrightarrow{-\otimes_{o_X} o_Y} \Qch(Y)\xrightarrow{-\otimes_{o_Y} \Mscr} \Qch(Y)\hookrightarrow \Qch(X)
\]
In this way $o_Y$ becomes an $X-X$-bimodule and one checks that it is in fact an algebra
quotient of $o_X$. Note that $o_Y$ now denotes both an algebra on $X$ and an algebra
on $Y$ (the identity functor) but for both interpretations we have $\Mod(o_Y)\cong\Qch(Y)$.  

For use in the sequel we write 
\[
o_X(-Y)=\ker (o_X\r o_X)
\]
$o_X(-Y)$ is the ideal in $o_X$ corresponding to the graded ideal
$gA\subset A$. Note that since $g$ is central we have in fact $o_X(-Y)=o_X(-3)$.

%The curve $Y$ is a divisor in $X$ (see \cite[\S3.6, Ch 5]{Blowup}). The corresponding  ideal
% $o_X(-Y)$ in $o_X$ corresponds to the graded ideal $gA\subset A$. Committing a slight abuse of notation
%we will write
%$o_Y=o_X/o_X(-Y)$. Note that $o_Y$ now is both an algebra on $X$ and an algebra
%on $Y$ (the identity functor) but for both interpretaions we have $\Mod(o_Y)\cong\Qch(Y)$. 
%By the fact that $g$ is
%central of degree $s+1$ we have $o_X(Y)\cong o_X(s+1)$ and hence for the normal bundle we find 
%\[
%o_X(Y)/o_X=o_Y(s+1)=\tau_\ast(-\otimes_{\Oscr_Y} \Lscr\otimes_{\Oscr_Y} \cdots
%\otimes_{\Oscr_Y} \Lscr^{\sigma^s})
%\]
%where $\tau=\sigma^{s+1}$. See \cite[Ch 10]{XXXBlowup} for more details. 
%Below we will also regard $o_Y$ as an $X-Y$ and an $Y-X$-bimodule.
%Likewise we view $o_Y(n)$ as a bimodule on $X$. 
%
%Below we will routinely regard an $Y$ bimodule $\Mscr$ as an $X$-module by defining the corresponding
%functor to be
%\[
%\Qch(X)\xrightarrow{-\otimes_{o_X} o_Y} \Qch(Y)\xrightarrow{-\otimes_{o_Y} \Mscr} \Qch(Y)\hookrightarrow \Qch(X)
%\]
If $\Mscr\in \Qch(X)$ then we define the ``global sections'' of $\Mscr$ as
\[
\Gamma(X,\Mscr)=\Hom_X(\Oscr_X,\Mscr)
\]
Similarly we define the global sections of an $X-X-$bimodule $\Nscr$ as in
\cite[Section 3.5]{VdB19}:
\[ 
\Gamma(X,\Nscr) := \Hom(\Oscr_X, \Oscr_X \otimes_{o_X} \Nscr) 
\]
Use of the functor $\Gamma(X,-)$ on bimodules requires some
care since it is apriori not left exact. However in our applications
it will be. %\marginpar{XXX Perhaps quote the relevant result here: \cite[Lemma 8.2.1]{XXXBlowup}
%with $\Vscr=\Oscr_X$.}

Note that $\Nscr$ is  an algebra object in the category of
bimodules then $\Gamma(X,\Nscr)$ is in fact an algebra for purely formal
reasons.  The same holds true for graded algebras and $\ZZ$-algebras.

It is easy to see that $A_n$ is equal to the global sections of $o_X(n)$:
\begin{eqnarray*}
\Gamma ( X, o_X(n)) & := & \Hom_X(\Oscr_X, \Oscr_X(n)) \\
 & = & \Hom_{\QGr(A)}( \pi(A), \pi(A(n))) \\
 & = & \Hom_{\Gr(A)}( A, A(n)) \qquad \text{ \cite[Theorem 8.1(5)]{artinzhang}}\\
 & = & A_n
\end{eqnarray*}
where the third equality follows from the AS-regularity of $A$.
Thus for the $\ZZ$-algebra associated to the two-Veronese of $A$ we have:
\[ \check{A}^{(2)}_{m,n} = \Gamma ( X, o_X(2(n-m))) = \Gamma( X, o_X(-2m) \otimes_{o_X} o_X(2n)) \]

%In order to create a sub-$\ZZ$-algebra of $\check{A}^{(2)}_{m,n}$

Below $(p_i)_i$ is a collection 
of %non-singular
 points  on $Y$: three distinct points in case 
$(r,s)=(3,2)$ and one point in case $(r,s)=(2,3)$. Let $d=\sum_i p_i$ be the
corresponding divisor on $Y$. As above we consider $\Oscr_d$ as a $Y-Y$-bimodule
but to avoid confusion we
write it as $o_d$.
Following our convention above we also consider $o_d$ as an $X$-bimodule.
Put
\begin{align} 
m_{d,Y}& = \ker (o_Y \rightarrow o_d) \label{mdY}\\
m_d &= \ker (o_X \rightarrow o_d) \label{md}
\end{align}
Clearly $m_{d,Y}\in \Bimod(Y-Y)$ as $m_{d,Y}$ corresponds to an ordinary ideal
sheaf in $\Oscr_Y$ (see \eqref{exact} above). The fact that $m_d\in \Bimod(X-X)$ follows by
applying \cite[Corollary\ 5.5.6]{VdB19} repeatedly for the different $p_i$.

Finally consider the following bimodules 
over $X$, respectively $Y$:
\begin{align} 
(\Dscr_Y)_{m,n}& = 
\begin{cases}
o_Y(-2m) \otimes_{o_Y} m_{\tau^{-m} d,Y} \ldots m_{\tau^{-n+1} d,Y} \otimes_{o_Y} o_Y(2n) & \text{if $n\ge m$}\\
0 & \text{if $n< m$}
\end{cases}\\
\Dscr_{m,n} &=
\begin{cases}
 o_X(-2m) \otimes_{o_X} m_{\tau^{-m} d} \ldots m_{\tau^{-n+1} d} \otimes_{o_X} o_X(2n) &\text{if $n\ge m$}\\
0&\text{if $n<m$}
\end{cases}
\label{case2}
\end{align}
where $\tau = \sigma^{s+1}$.  Here  $m_{\tau^{-k} d} \ldots m_{\tau^{-l} d}$ is the image
of
\[
m_{\tau^{-l} d}\otimes_X \cdots \otimes_X  m_{\tau^{-l} d} \longrightarrow o_X\otimes_X\cdots\otimes_X o_X=o_X
\]
A priori this image lies only in $\BIMOD(X-X)$ but with the same method as the proof of
\cite[Proposition 6.1.1]{VdB19} one verifies that it lies in fact in $\Bimod(X-X)$.

The collections of bimodules $\Dscr\overset{\text{def}}{=}\bigoplus_{m,n} \Dscr_{m,n}$,  
$\Dscr_Y\overset{\text{def}}{=}\bigoplus_{m,n}  (\Dscr_Y)_{m,n}$ represent 
$\ZZ$-algebra objects respectively in $\Bimod(X-X)$ and $\Bimod(Y-Y)$.  For example the product
\[
\Dscr_{m,n}\otimes_{o_X} \Dscr_{n,p} 
\]
is given by
\begin{multline*}
 o_X(-2m) \otimes_{o_X} m_{\tau^{-m} d} \ldots m_{\tau^{-n+1} d} \otimes_{o_X} o_X(2n)
\otimes_{o_X}  o_X(-2n) \otimes_{o_X} m_{\tau^{-n} d} \ldots m_{\tau^{-p+1} d} \otimes_{o_X} o_X(2p)
\r \\o_X(-2m) \otimes_{o_X} m_{\tau^{-m} d} \ldots m_{\tau^{-n+1} d} \otimes_{o_X} m_{\tau^{-n} d} \ldots m_{\tau^{-p+1} d} \otimes_{o_X} o_X(2p)
\r \\o_X(-2m) \otimes_{o_X} m_{\tau^{-m} d} \ldots m_{\tau^{-n+1} d}m_{\tau^{-n} d} \ldots m_{\tau^{-p+1} d} \otimes_{o_X} o_X(2p)
\end{multline*}
Denote the global sections of $\Dscr$ and $\Dscr_Y$ by $D$, $D_Y$ respectively. Thus $D$
and $D_Y$ are both $\ZZ$-algebras. 

\medskip

The inclusion $\Dscr_{m,n} \hookrightarrow o_X(2(n-m))$ gives rise to
an inclusion of $\ZZ$-algebras $D \hookrightarrow \check{A}^{(2)}$ by using 
\cite[Lemma 8.2.1]{VdB19} with $\Escr=\Oscr_X$. This is the sought sub-$\ZZ$-algebra of
 $\check{A}^{(2)}$.

%as a
%twisted homogeneous coordinate ring from a certain elliptic helix on
%$Y$.
%
%The actual motivation for studying this $D_{m,n}$ is that it pops up naturally when considering the blow up of both $X$ and $Y$ at $d$. As we do not yet completely understand what is happening with the geo%metric data (especially in the singular case) we chose not to go into those details now, but refer the interested reader to the (sketchy) motivational section at the end of this paper.
\section{Analysis of $D_Y$}
Our aim is to show that $D$ is a quadratic AS-regular $\ZZ$-algebra.
 The first step in understanding $D$  is showing
that the quotient $\ZZ$-algebra $D_Y$ is
a $\ZZ$-analogue of a twisted homogeneous coordinate ring (see \S\ref{secremas}). We do this next.

We 
%first want to show that $D_Y$ is in fact a twisted homogeneous coordinate ring on $Y$. 
have 
 to find an elliptic helix $\{ \Lscr_i \}_i$ as defined in \S\ref{secremas} such that 
%satisfying the conditions of \cite[Proposition 1.1, Proposition 1.2]{VdB38} such that
\[ 
(D_Y)_{m,n} = B(Y, \{ \Lscr_i\}_i )_{m,n} := \Gamma(Y,\Lscr_m \otimes \ldots \otimes \Lscr_{n-1} )
\]
%
%In order to find good candidates for the $\Lscr_i$ we take a closer look at the global sections of $\Dscr_Y$. 
%First note that multiplication on $D_Y$ is given by composition of natural transformations:
%\begin{eqnarray*}
%(D_Y)_{m,n} & = & \Gamma\left(Y,o_Y(-2m) \otimes m_{\tau^{-m}d,Y} \ldots m_{\tau^{-n+1}d,Y} \otimes o_Y(2(n))\right) \\
% & := & \Hom \left(\Oscr_Y, \Oscr_Y \otimes o_Y(-2m) \otimes m_{\tau^{-m-1}d,Y} \ldots m_{\tau^{-n+1}d,Y} \otimes o_Y(2n)\right) \\
% & \cong & \Hom \left(\Oscr_Y \otimes o_Y(-2n), \Oscr_Y \otimes o_Y(-2m) \otimes m_{\tau^{-m-1}d,Y} \ldots m_{\tau^{-n+1}d,Y} \right) \\
% & \cong & \Hom \left(\Oscr_Y(-2n) \otimes (m_{d,Y} \ldots m_{\tau^{-n+1}d,Y})^{-1}, \Oscr_Y(-2m) \otimes (m_{d,Y} \ldots m_{\tau^{-m+1}d,Y})^{-1} \right)
%\end{eqnarray*}
The functor $- \otimes_{o_Y} m_{d,Y}$ is given by $- \otimes_{\Oscr_Y}
\Mscr_{d,Y}$ where $\Mscr_{d,Y}$ is the ideal sheaf of $d$ on $Y$ (see
\eqref{exact} above).  Moreover as we have already mentioned
$o_Y(1)=\sigma_* ( - \otimes_k \Lscr)$ (using the notations of
\cite{Translation} we could write this as: $m_{d,Y} = \
_1(\Mscr_{d,Y})_1$ and $o_Y(1) = \ _1\Lscr_\sigma$).
%This allows us to write $(D_Y)_{m,n}$ as global sections of a sheaf on $Y$ which is given in terms of $\Lscr, \Mscr_{\sigma^i d,Y}$ and $\sigma$:
Using the fact that $-\otimes o_Y(2n)$ is an autoequivalence we
compute for $n\ge m$\\
\begin{equation}
\label{thc}
\begin{aligned}
  (D_Y)_{m,n} & =  \Hom \left(\Oscr_Y ,\Oscr_Y (-2m) \otimes m_{\tau^{-m}d,Y} \ldots m_{\tau^{-n+1}d,Y}  \otimes o_Y(2n) \right) \\
  & =  \Hom \left(\Oscr_Y(-2n), \Oscr_Y (-2m) \otimes m_{\tau^{-m}d,Y} \ldots m_{\tau^{-n+1}d,Y} \right) \\
  & =  \Hom \left( \left(\Lscr \otimes \ldots \otimes \Lscr^{\sigma^{2n-1}} \right)^{-1}, \left(\Lscr \otimes \ldots \otimes  \Lscr^{\sigma^{2m-1}} \right)^{-1} \otimes \Mscr_{\tau^{-m}d,Y} \ldots \Mscr_{\tau^{-n+1}d,Y} \right) \\
  & =  \Hom \left( \Oscr_Y, \Mscr_{\tau^{-m}d,Y} \ldots \Mscr_{\tau^{-n+1}d,Y} \otimes \Lscr^{\sigma^{2m}} \otimes \ldots \otimes \ \Lscr^{\sigma^{2n-1}} \right) \\
  & =  \Gamma \left( Y, \Mscr_{\tau^{-m}d,Y} \ldots \Mscr_{\tau^{-n+1}d,Y} \otimes \Lscr^{\sigma^{2m}} \otimes \ldots \otimes  \Lscr^{\sigma^{2n-1}} \right)\\
  &= \Gamma \left(Y,\Lscr_m \otimes \ldots \otimes \Lscr_{n-1} \right)
\end{aligned}
\end{equation}
%(as the $\Mscr$'s are invertible ideals in $\Oscr_Y$ we could have written tensor signs between them without changing the result). By constructing an appropriate commutative
%diagram we find that the product 
with
\begin{equation}
\label{lidef}
\Lscr_i = \Mscr_{\tau^{-i}d,Y} \otimes \Lscr^{\sigma^{2i}} \otimes 
 \Lscr^{\sigma^{2i+1}} 
\end{equation}
A routine but somewhat tedious verification shows that the isomorphism
constructed in \eqref{thc} sends the product on the left 
to the obvious product on the right corresponding to the tensorproduct.

%(Where we used the notation $\Mscr_{\sigma^i d,Y} \Mscr_{\sigma^j d,Y}$ to denote the image of the map $\Mscr_{\sigma^i d,Y} \otimes_{\Oscr_Y} \Mscr_{\sigma^j d,Y} \rightarrow \Oscr_Y \otimes_{\Oscr_Y} \Oscr_Y = \Oscr_Y$)\\ \\
%Therefore we will the following choice for $\Lscr_i$:

%By the above this gives us the desired isomorphism $(D_Y)_{m,n} = B(Y,
%\{ \Lscr_i\}_i )_{m,n}$. Moreover this isomorphism is compatible with
%the multiplication on $D$ and $B(Y, \{ \Lscr_i\}_i )$ because in both
%algebras the multiplication is given by composition of transformations
%in $\Hom \left(\Oscr_Y(-2n) \otimes (m_{d,Y} \ldots
%  m_{\tau^{-n+1}d,Y})^{-1}, \Oscr_Y(-2m) \otimes (m_{d,Y} \ldots
%  m_{\tau^{-m+1}d,Y})^{-1} \right)$

We now have to check that the $(\Lscr_i)_i$ constitute an elliptic helix as introduced in \S\ref{secremas}. Using our standing
hypothesis that $Y$ is smooth (since $A$ was assumed to be a Sklyanin algebra)
we must verify the following facts
\begin{enumerate}
\item $\deg \Lscr_i=3$
\item $\Lscr_0\not\cong\Lscr_1$.
\item $
\Lscr_i\otimes \Lscr_{i+1}^{-2}\otimes \Lscr_{i+2}\cong\Oscr_Y
$.
\end{enumerate}
We use the following lemma. 
\begin{lemma}
\label{geometricdatalemma}
If $A$ is quadratic, one has
 \[
\sigma^\ast(\Lscr_i)=\Lscr_{i+1}
\]
and if $A$ is cubic
\[
\psi^{\ast}(\Lscr_i)=\Lscr_{i+1}
\]
 where $\psi$ is an arbitrary translation satisfying $\psi^3=\sigma^4$.
\end{lemma}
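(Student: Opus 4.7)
The plan is to expand both sides of the claimed isomorphism term-by-term via \eqref{lidef} and then check equality as line bundles on $Y$. Since $Y$ is a smooth elliptic curve, the Abel--Jacobi map identifies $\Pic^0(Y)\cong Y$, and the formula gives $\deg\Lscr_i=-\deg d+2\deg\Lscr=3$ in either case (quadratic: $-3+6=3$; cubic: $-1+4=3$). Hence $\sigma^\ast\Lscr_i$ (resp.\ $\psi^\ast\Lscr_i$) and $\Lscr_{i+1}$ will have equal degree, and it suffices to show their ratio is trivial in $\Pic^0(Y)$. I will repeatedly use: (i) $f^\ast\Mscr_{d',Y}=\Mscr_{f^{-1}d',Y}$ for any automorphism $f$ of $Y$; (ii) for a line bundle $\Nscr$ of degree $n$ and a translation $\alpha$ by $a\in Y$, the class of $\alpha^\ast\Nscr\otimes\Nscr^{-1}$ in $\Pic^0(Y)\cong Y$ is $-na$; and (iii) translations act trivially on $\Pic^0(Y)$.

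In the quadratic case I would apply $\sigma^\ast$ to \eqref{lidef}; together with $\sigma^\ast\Lscr^{\sigma^k}=\Lscr^{\sigma^{k+1}}$ and (i), this yields
\[
\sigma^\ast\Lscr_i=\Mscr_{\sigma^{-1}\tau^{-i}d,Y}\otimes\Lscr^{\sigma^{2i+1}}\otimes\Lscr^{\sigma^{2i+2}},
\]
so the ratio $\sigma^\ast\Lscr_i\otimes\Lscr_{i+1}^{-1}$ factors as $\Oscr_Y(\tau^{-i-1}d-\sigma^{-1}\tau^{-i}d)\otimes\Lscr^{\sigma^{2i+1}}\otimes(\Lscr^{\sigma^{2i+3}})^{-1}$. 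By (iii) both resulting $\Pic^0$-classes are independent of $i$. Substituting $\tau=\sigma^3$, $\deg d=\deg\Lscr=3$ and using (ii), the two classes evaluate to $-6t$ and $+6t$ (where $t$ is the translation of $\sigma$) and cancel.

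In the cubic case the same strategy applies with $\psi^\ast$ in place of $\sigma^\ast$. Commutativity of $\psi$ and $\sigma$ (both translations of $Y$) still gives $\psi^\ast\Lscr^{\sigma^k}=\Lscr^{\psi\sigma^k}$. Writing $t,u$ for the translations of $\sigma$ and $\psi$, the hypothesis $\psi^3=\sigma^4$ reads $3u=4t$. The ratio $\psi^\ast\Lscr_i\otimes\Lscr_{i+1}^{-1}$ now decomposes into three degree-$0$ line bundles — one from the ideal sheaves and one from each $\Lscr$-factor; a direct computation using $\tau=\sigma^4$, $\deg d=1$, $\deg\Lscr=2$ and (ii)--(iii) will give the three Picard classes $u-4t$, $-2u+4t$ and $-2u+4t$, whose sum $-3u+4t$ vanishes exactly by the hypothesis.

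The hard part will be the bookkeeping: one must carefully distribute pullbacks across the three tensor factors, apply (i) to track the divisor-theoretic shifts of $d$, and apply (ii)--(iii) to convert each factor into a single element of $Y\cong\Pic^0(Y)$. No deeper geometric input is required beyond Abel--Jacobi for elliptic curves and the translation identities (i)--(iii).
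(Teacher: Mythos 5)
Your proposal is correct and follows essentially the same route as the paper: expand both sides via \eqref{lidef}, reduce to a degree-zero ratio in $\Pic(Y)$, and use the fact that a translation shifts a line bundle class by its degree times a fixed degree-zero class (your Abel--Jacobi formulation of (ii)--(iii) is exactly the paper's identity $[\sigma^*\Mscr]=[\Mscr]+\deg(\Mscr)\cdot[\Nscr]$). The only cosmetic difference is in the cubic case, where the paper introduces a cube root $\sigma_3$ of $\sigma$ to do the bookkeeping with a single translation, whereas you work directly with the two translation points $t,u$ and the relation $3u=4t$; the computations are equivalent.
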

\begin{proof}
We compute in the quadratic case
\begin{align*}
\sigma^\ast(\Lscr_i)\otimes \Lscr^{-1}_{i+1}&
=
 \Mscr_{\tau^{-i}d,Y}^\sigma \otimes 
 \Lscr^{\sigma^{2i+1}} \otimes \Lscr^{\sigma^{2i+2}} 
\otimes \Mscr_{\tau^{-i-1}d,Y} ^{-1}\otimes (\Lscr^{\sigma^{2i+2}})^{-1}
 \otimes ( \Lscr^{\sigma^{2i+3}} )^{-1}\\
&=
 \Mscr^\sigma_{\tau^{-i}d,Y} \otimes \Lscr^{\sigma^{2i+1}} 
\otimes (\Mscr^{\sigma^3}_{\tau^{-i}d,Y}) ^{-1} \otimes ( \Lscr^{\sigma^{2i+3}} )^{-1}
%&=\sigma^\ast\Mscr_{\tau^{-i}d,Y}\otimes \Mscr_{\tau^{-i-1}d,Y} ^{-1}\otimes  \sigma^{*^{2i+1}}\Lscr \otimes (\sigma^{*^{2i+3}} \Lscr )^{-1}\\
%&=\sigma^\ast\Mscr_{\tau^{-i}d,Y}\otimes \sigma^{\ast^{(3)}}\Mscr_{\tau^{-i}d,Y} ^{-1}\otimes  \sigma^{*^{2i+1}}\Lscr \otimes (\sigma^{*^{2i+3}} \Lscr )^{-1}\\
\end{align*}

  Since $\sigma$ is a
  translation there is an invertible sheaf $\Nscr$ of degree zero
  on $Y$ such that for each invertible sheaf $\Mscr$ on $Y$ we have
  the following identities in $\Pic(Y)$:
\[ 
[\sigma^* \Mscr ] = [\Mscr] + \deg(\Mscr) \cdot [\Nscr] 
\]
(This statement is true in even higher generality, see \cite[Theorem 4.2.3]{VdB38}) Thus
\[
[\sigma^\ast(\Lscr_i)\otimes \Lscr^{-1}_{i+1}]
=
\left(\deg(\Mscr_{\tau^{-i}d,Y})+\deg(\Lscr^{\sigma^{2i}})-3\deg(\Mscr_{\tau^{-i}d,Y})-3\deg(\Lscr^{\sigma^{2i}})\right)[\Nscr]
=0
\]
taking into account that in the quadratic case
\begin{align*}
\deg(\Mscr_{\tau^{-i}d,Y})&=-\deg d=-3\\
\deg(\sigma^{\ast^{2i}}\Lscr)&=3
\end{align*}
%\[ 
%[\tau^* \Mscr ] = [\sigma^{*^3} \Mscr ] = [\Mscr] + 3 \cdot \deg(\Mscr) \cdot [\Nscr] 
%\]

Now we consider the cubic case. It will be convenient to introduce a translation
$\sigma_3$ which is a cube root of $\sigma$
\begin{align*}
  \sigma^{4^\ast}_3(\Lscr_i)\otimes \Lscr^{-1}_{i+1}&=
  \Mscr^{\sigma_3^4}_{\tau^{-i}d,Y} \otimes \Lscr^{\sigma_3^{6i+4}}
  \otimes \Lscr^{\sigma_3^{6i+7}} \otimes
  (\Mscr_{\tau^{-i}d,Y}^{\sigma_3^{12}}) ^{-1}\otimes
  (\Lscr^{\sigma_3^{6i+6}})^{-1} \otimes  \Lscr^{\sigma_3^{6i+9}}
  )^{-1}
\end{align*}
Let $\Nscr_3$ be a line bundle of degree
zero such that for any line bundle $\Mscr$
\[
[\sigma^*_3 \Mscr ] = [\Mscr] + \deg(\Mscr) \cdot [\Nscr_3] 
\]
We obtain
\begin{multline*}
[\sigma^{4^\ast}_3(\Lscr_i)\otimes \Lscr^{-1}_{i+1}]
=\left(4\deg(\Mscr_{\tau^{-i}d,Y} )+4\deg(\Lscr^{\sigma_3^{6i}})+
7\deg(\Lscr^{\sigma_3^{6i}})-12\deg(\Mscr_{\tau^{-i}d,Y})\right.\\
\left.-6\deg(\Lscr^{\sigma_3^{6i}})-9\deg(\Lscr^{\sigma_3^{6i}}) \right)[\Nscr_3]=0
\end{multline*}
taking into account that this time
\begin{align*}
\deg(\Mscr_{\tau^{-i}d,Y})&=-\deg d=-1\\
\deg(\Lscr^{\sigma^{6i}_3})&=2 \qedhere
\end{align*}

\end{proof}
We now verify that $(\Lscr_i)_i$  is an elliptic helix. Condition (1) is immediate and condition (3) follows from
Lemma \ref{geometricdatalemma}. Assume that (2) is false in the quadratic case. Then $\sigma^\ast(\Lscr_0)=\Lscr_0$.
In other words $\sigma$ is translation by a point of order three. But this contradicts 
our assumption that $A$ is a Sklyanin algebra. Now assume that (2) is false in the cubic case. The $\psi$ is a translation
by a point of order three and from the definition of $\psi$ it follows that $\sigma$ is translation
by a point of order four, again contradicting the fact that $A$ is  Sklyanin algebra. 
%As for now remark that \cite[Theorem 4.2.2.]{VdB38} gives the existence of an AS-regular graded algebra $A'$ such that $A(Y,\{ \Lscr_i \} _i) = \check{A'}$. In the proof of this theorem one finds an automo%rphism $\alpha: Y \rightarrow Y$ such that $\Lscr_{n+1} \cong \alpha^* \Lscr_n$ for all $n$. By all the above we can easily see that $\alpha = \sigma$ will do the job:
%\begin{eqnarray*}
% [\Lscr_{n+1}] & = &  [\Mscr_{\tau^{-n-1}d,Y}] + [\sigma^{*^{2n+2}} \Lscr] + [\sigma^{*^{2n+3}} \Lscr]\\
%& = & [\tau^* \Mscr_{\tau^{-n}d,Y}] + [\sigma^{*^2}\sigma^{*^{2n}} \Lscr] + [\sigma^{*^2}\sigma^{*^{2n+1}} \Lscr]\\
%&=& [\Mscr_{\tau^{-n}d,Y}] -9 [\Nscr] + [\sigma^{*^{2n}} \Lscr] + 6 [\Nscr] + [\sigma^{*^{2n+1}} \Lscr] + 6 [\Nscr]\\
%& = & [\Lscr_n] + 3[\Nscr] \\
%& = & [\sigma^* \Lscr_n]
%\end{eqnarray*}
%\newpage
\section{Showing that $D$ is AS-regular}
For use below recall some some commutation formulas. First note that since $o_Y(1)=\sigma_\ast(-\otimes_{\Oscr_Y}\Lscr)$ we have
\[
o_d\otimes_{o_Y} o_Y(1)=o_Y(1)\otimes_{o_Y} o_{\sigma d}
\]
(we may see this by applying both sides to an object in $\Qch(Y)$). Using the definitions of $m_d$, $m_{d,Y}$ (see 
\eqref{mdY}, \eqref{md}) we deduce from this
\begin{align*}
m_{d,Y}\otimes_{o_Y} o_Y(1)&=o_Y(1)\otimes_{o_Y} m_{\sigma d,Y}\\
m_d\otimes_{o_X} o_X(1)&=o_X(1)\otimes_{o_X} m_{\sigma d}
\end{align*}
Similar formulas also hold for longer products of $m$'s such as for example appear in the definition of $(\Dscr_Y)_{m,n}$
and $\Dscr_{m,n}$.

If $\Mscr$ is a bimodule then we will write %temporarily
 $(a)\Mscr$ for
  $\Oscr_X(a)\otimes_{o_X} \Mscr$. Thus the ``right structure'' of $\Mscr$ is $(0)\Mscr$.
For the sequel we need a resolution of 
$(a)\Dscr_{m,m+1}$. In the quadratic case we use the following lemma.
%For the case $(r,s)=(3,2)$ this follows easily from the
%following lemma:
\begin{lemma} 
  \label{rightstructure} Let $A$ be a quadratic AS-regular algebra of
  dimension 3. Let $q_1,q_2,q_3$ be distinct non-collinear points in
  $Y$ and let $Q_1,Q_2,Q_3$ be the corresponding point
  modules.\footnote{A point module over $A$ is a graded right
    $A$-module generated in degree zero with Hilbert function
    $1,1,1,1,1,\ldots$. There is a 1-1 correspondence between points
    in $Y$ and point modules over $A$. See \cite{ATV2}.}. 
 Pick an $m$
  in $(Q_1\oplus Q_2\oplus Q_3)_0$ whose three components are non-zero
  and let $M=mA$. Then the minimal resolution of $M$ has the following
  form
\[
0\r A(-3)^{\oplus 2}\r A(-2)^{\oplus 3} \r A\r M\r 0
\]
\end{lemma}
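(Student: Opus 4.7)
The plan is to compare $M$ with the ambient module $N := Q_1 \oplus Q_2 \oplus Q_3$ through a short exact sequence, and to read off the minimal resolution of $M$ from the known minimal resolutions of the point modules and of the simple module $S = A/A_+$.

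First I would determine the Hilbert series of $M$ using the non-collinearity hypothesis. Writing $W_i := \ker(A_1 \twoheadrightarrow (Q_i)_1) \subseteq A_1$, non-collinearity is equivalent to $W_1 \cap W_2 \cap W_3 = 0$, which makes the map $A_1 \to (Q_1)_1 \oplus (Q_2)_1 \oplus (Q_3)_1$, $a \mapsto m \cdot a$, an isomorphism; in particular $M_1 = N_1$. Surjectivity of $A_n \to N_n$ in higher degrees then follows inductively from cyclicity of each $Q_i$, yielding
\[
h_M(t) \;=\; 1 + \frac{3t}{1-t} \;=\; \frac{1 - 3t^2 + 2t^3}{(1-t)^3},
\]
which is already consistent with the proposed resolution numerically. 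It also produces a short exact sequence
\[
0 \to M \to N \to S^{\oplus 2} \to 0
\]
whose cokernel is concentrated entirely in degree $0$.

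Next I would apply $\mathrm{Tor}^A_\bullet(-, k)$ to this short exact sequence. Each point module has minimal resolution $0 \to A(-2) \to A(-1)^{\oplus 2} \to A \to Q_i \to 0$, while $S$ has the Koszul resolution $0 \to A(-3) \to A(-2)^{\oplus 3} \to A(-1)^{\oplus 3} \to A \to S \to 0$. The vanishing of $\mathrm{Tor}_i^A(N, k)$ for $i \geq 3$ and of $\mathrm{Tor}_i^A(S, k)$ for $i \geq 4$ immediately forces $\mathrm{Tor}_i^A(M, k) = 0$ for $i \geq 3$. A degree-by-degree inspection of the long exact sequence, using that $M$ is minimally generated in degree $0$ by a single element, reduces the whole computation to one key claim: the connecting map $\mathrm{Tor}_1^A(N, k)_1 \to \mathrm{Tor}_1^A(S^{\oplus 2}, k)_1$ is surjective. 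Granting this yields $\mathrm{Tor}_1^A(M,k) \cong k^{\oplus 3}$ concentrated in degree $2$ and $\mathrm{Tor}_2^A(M,k) \cong k^{\oplus 2}$ concentrated in degree $3$, which is exactly the claimed resolution.

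The main technical obstacle is therefore the verification of that single surjectivity. Unwinding the identifications, the map sends $\bigoplus_i W_i \subseteq A_1^{\oplus 6}$ into $A_1 \oplus A_1$ as three blocks $W_1 \oplus 0$, $\; 0 \oplus W_2$, and a diagonal image $\{(c_1 w, c_2 w) : w \in W_3\}$ with $c_1, c_2$ nonzero scalars determined by $(m_1,m_2,m_3)$. A straightforward dimension count shows the image fills all of $A_1 \oplus A_1$ precisely when $W_1 \cap W_2 \cap W_3 = 0$, and this in turn requires both $W_1\cap W_2\cap W_3 = 0$ and all three $m_i$ nonzero. In other words, this step is exactly where both hypotheses of the lemma — non-collinearity of the $q_i$ and nondegeneracy of $m$ — are used in an essential way.
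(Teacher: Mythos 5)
Your overall route -- establishing $M_{\geq 1}=N_{\geq 1}$ from non-collinearity and non-degeneracy of $m$, hence the exact sequence $0\to M\to N\to S^{\oplus 2}\to 0$, and then running the long exact sequence of $\Tor^A_\bullet(-,k)$ -- is viable and genuinely different from the paper's (which analyzes $\ker(A\to M)$ directly, using that $M_{\geq 1}$ is a sum of shifted point modules, and finishes with torsion-freeness and a Hilbert series count). Your first step is fine and even a little cleaner than the paper's appeal to the geometry of $Y$. However, your reduction of the $\Tor$ computation to a ``single key claim'' is wrong, and this is a genuine gap. Write $f_i:\Tor_i^A(N,k)\to\Tor_i^A(S^{\oplus 2},k)$. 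The surjectivity of $f_1$ in degree $1$, which you single out, is in fact automatic once the short exact sequence is in place: its cokernel in degree $1$ computes $\Tor_0^A(M,k)_1=(M/MA_{\geq 1})_1=0$, which holds simply because $M=mA$ is cyclic in degree $0$. (Your explicit block computation is really a re-verification of $M_1=N_1$ in disguise, so the hypotheses are not being used a second time there.) More importantly, granting $f_1$ surjective does \emph{not} yield $\Tor_1^A(M,k)\cong k(-2)^{3}$ and $\Tor_2^A(M,k)\cong k(-3)^{2}$. The long exact sequence in degree $2$ reads
\[
0\to \Tor_2^A(M,k)_2\to \Tor_2^A(N,k)_2\cong k^3\xrightarrow{\ f_2\ }\Tor_2^A(S^{\oplus 2},k)_2\cong k^6\to\Tor_1^A(M,k)_2\to 0,
\]
so what you actually need is the injectivity of $f_2$, equivalently $\Tor_2^A(M,k)_2=0$; if $f_2$ had a $d$-dimensional kernel you would get $\Tor_1^A(M,k)=k(-2)^{3+d}$ and $\Tor_2^A(M,k)=k(-3)^2\oplus k(-2)^d$, and note that the Hilbert series of $M$ cannot distinguish these possibilities since the extra terms cancel. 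Your proposal never addresses $f_2$.

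The good news is that the missing step is short: from the part of the long exact sequence you did establish, $\Tor_1^A(M,k)$ is a quotient of $\Tor_2^A(S^{\oplus 2},k)=k(-2)^6$, hence concentrated in degree $2$; since also $\ker(A\to M)$ vanishes in degrees $\leq 1$ (this is exactly $\dim M_1=3$), minimality of the free resolution forces $\Tor_2^A(M,k)$ to live in degrees $\geq 3$, so $\Tor_2^A(M,k)_2=0$ and $f_2$ is injective. With that line added your argument closes up and gives the stated resolution. As it stands, though, the claimed reduction to the surjectivity of $f_1$ is incorrect and the proof is incomplete.
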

\begin{proof}
  Let $g$ be the normalizing element of degree three in $A$ and let
  $B=A/gA$. By using the explicit category equivalence
  $\Qch(B)\cong\QGr(Y)$ \cite{AV} one easily proves that the map $B_{\ge 1}\r M_{\ge
    1}$ is surjective. Whence the corresponding map $u:A_{\ge 1}\r
  M_{\ge 1}$ is also surjective.

Look at the exact sequence
\[
0\r \ker u\r A_{\ge 1}\r
  M_{\ge 1}\r 0
\]
Tensoring this exact sequence with $k$ yields an exact sequence
\[
\Tor_1^A(M_{\ge 1},k)\r \ker u\otimes_A k \r A_{\ge 1}\otimes_A
k\xrightarrow{\bar{u}} M_{\ge 1}\otimes k \r 0
\]
Now both $A_{\ge 1}$ and $M_{\ge 1}$ are generated in degree one and
furthermore $\dim A_1=\dim M_1$. Hence it follows that $\bar u$ is an
isomorphism. Therefore $\ker u\otimes_A k$ is a quotient of
$\Tor_1^A(M_{\ge 1},k)$. From the fact that $M_{\ge 1}$ is a sum of
shifted point modules we compute that $\Tor_1^A(M_{\ge
  1},k)=k(-2)^3$. Thus $\ker u$ is a quotient of $A(-2)^3$. Now using
the fact that $M$ has no torsion and hence has projective dimension
two we may now complete the full resolution of $M$ using a Hilbert
series argument. 
\end{proof}
Note that 
\[
\Dscr_{m,m+1}=o_X(-2m) \otimes_{o_X} m_{\tau^{-m} d} \otimes_{o_X} o_X(2(m+1)) %=m_{\sigma^{-2m}\tau^{-m}d}\otimes_{o_X} o_X(2)
\]
and
thus
\[
\Oscr_{X}(a)\otimes_{o_X} \Dscr_{m,m+1}=(\Oscr_X\otimes_{o_X} m_{\sigma^{2m-a}\tau^{-m}d})(a+2)
\]
where
\[
\Oscr_X\otimes_{o_X} m_{\sigma^{2m-a}\tau^{-m}d}=\ker(\Oscr_X\r \Oscr_{\sigma^{2m-a}\tau^{-m}d})
\]
Thus $\Oscr_X\otimes_{o_X} m_{\sigma^{2m-a}\tau^{-m}d}$ is of the form $\pi (\ker (A\r M))$ 
with $M$ as in Lemma \ref{rightstructure}.
We  conclude that we have a  resolution of
$(a)\Dscr_{m,m+1}$ of the form  
\begin{equation}
\label{ref-9-9}
0\r \Oscr_X(a-1)^{\oplus 2} \r \Oscr_X(a)^{\oplus 3} \r (a)\Dscr_{m,m+1}\r 0 %\tag{$\dag $}
\end{equation}
This resolution is actually of the form
\begin{equation}
\label{ref-10-10}
0\r \Oscr_X(a-1)^{\oplus 2} \r \Oscr_X(a)\otimes_k D_{m,m+1} \r (a)\Dscr_{m,m+1}\r 0 %\tag{$\ddag$}
\end{equation}
In the cubic  case the resolution will follow from the next lemma:
\begin{lemma}
Let $A$ be a cubic AS-regular algebra of dimension 3. Let $p$ be a point in $Y$ and let $P$ be the corresponding point module. Then there is a complex of the following form:
\begin{equation}
\label{minimal}
0\r A(-5) \xrightarrow{(\zeta,0)} A(-4)^{\oplus 2} \oplus A(-3) \r A(-2)^{\oplus 3} \r A\r P\r 0
\end{equation}
where $\zeta$ is part of the minimal resolution of $k$ 
 as given in \cite[Theorem 1.5.]{artinschelter} 
\[
0\r A(-4) \xrightarrow{\zeta} A(-3)^2\xrightarrow{\varepsilon} A(-1)^2\xrightarrow{\delta_0} A\xrightarrow{\gamma} k\r 0
\]
Moreover the complex \eqref{minimal} is exact everywhere except at $A$ 
where it has one-dimensional cohomology, concentrated in degree one. 
\end{lemma}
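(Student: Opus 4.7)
Let $N = \ker(A \to P)$. From the short exact sequence $0 \to N \to A \to P \to 0$ one computes $H_N(t) = H_A(t) - 1/(1-t) = (t+t^2-t^3)/((1-t)^2(1-t^2))$; in particular $\dim_k N_1 = 1$ and $\dim_k N_2 = 3$. The plan is to build the claimed complex as the concatenation of a minimal free resolution of the truncation $N_{\geq 2} \subset A$ with the inclusion $N_{\geq 2} \hookrightarrow A$ and the quotient $A \twoheadrightarrow P$. Since $N/N_{\geq 2} = N_1$ is one-dimensional and concentrated in degree $1$, the resulting complex is then automatically exact everywhere except at $A$, where the cohomology is $k(-1)$ as required.

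To produce the resolution of $N_{\geq 2}$, I pick a nonzero $f \in N_1$. Since $A$ is a domain, left-multiplication by $f$ gives an inclusion $A(-1) \cong fA \hookrightarrow N$, and $N/fA$ has Hilbert series $t^2/((1-t)(1-t^2))$, that of a shifted line module. Using the central element $g$ and the twisted coordinate ring $B = A/gA$ of $Y$, I would identify $N/fA$ with a genuine shifted line module $L'(-2)$; in particular it is cyclic and generated in degree $2$. Lifting its generator to an element $c \in N_2$ and fixing a basis $a_1, a_2$ of $A_1$, the elements $fa_1, fa_2, c$ span $N_2$ and generate $N_{\geq 2}$ as a right $A$-module, yielding a surjection $\varphi : A(-2)^{\oplus 3} \twoheadrightarrow N_{\geq 2}$.

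Hilbert-series bookkeeping then forces $\ker \varphi$ to be minimally generated by one element in degree $3$ and two elements in degree $4$. The degree-$3$ generator $\kappa_1 = (s_1', s_2', -r)$ records the defining relation $cr = fs$ of the line module $L'$, with $r \in A_1$ and $s = a_1 s_1' + a_2 s_2' \in A_2$; the degree-$4$ generators $\kappa_2, \kappa_3$ encode the two cubic relations $\rho_i = a_1 \rho_{i,1} + a_2 \rho_{i,2}$ of $A$ as $(\rho_{i,1}, \rho_{i,2}, 0)$, using $f\rho_i = 0$. For the final differential $A(-5) \to A(-4)^{\oplus 2} \oplus A(-3)$, a direct computation identifies a second syzygy as a triple $(\alpha, \beta_1, \beta_2)$ with $\alpha \in A_2$ and $\beta_1, \beta_2 \in A_1$ satisfying $\alpha \kappa_1 + \beta_1 \kappa_2 + \beta_2 \kappa_3 = 0$ in $A(-2)^{\oplus 3}$. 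Reading off the third coordinate of this equation gives $-r\alpha = 0$, and since $A$ is a domain with $r \neq 0$ this forces $\alpha = 0$. The remaining two equations on $(\beta_1, \beta_2)$ are exactly those cut out by the map $\varepsilon : A(-3)^{\oplus 2} \to A(-1)^{\oplus 2}$ from the minimal resolution of $k$, whose unique one-dimensional syzygy space is given by $\zeta$. So the second syzygy must have the form $(\zeta, 0)$ as claimed.

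The main technical obstacle is upgrading the Hilbert-series identification of $N/fA$ to a genuine isomorphism with a shifted line module $L'(-2)$, since the cyclicity of $N/fA$ (hence the generation of $N_{\geq 2}$ in degree $2$ by only three elements) is essential to the argument. The plan here is to exploit the short exact sequence $0 \to gA \cap N \to N \to N/gA \to 0$, identify $N/gA$ with an ideal sheaf on $Y$ via the equivalence $\QGr(B) \cong \Qch(Y)$ recalled in the introduction, and use the structure of this ideal sheaf on $Y$ to deduce cyclicity of $N/fA$.
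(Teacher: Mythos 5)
Your route is genuinely different from the paper's. The paper starts from the known minimal resolution of the point module, $0\to A(-3)\to A(-2)\oplus A(-1)\to A\to P\to 0$ (quoted from \cite[Proposition 6.7]{ATV2}), splices it with the minimal resolution of $k(-1)$ along the summand $A(-1)$ (lifting $\alpha$ through $\delta_0\oplus\id$ to get a map $\eta$, which exists because $\gamma\circ\alpha=0$ for degree reasons), and then reads off the complex and its cohomology by a diagram chase plus a Hilbert series count. You instead compute the minimal free resolution of $N_{\geq 2}$, $N=\ker(A\to P)$, from scratch: your syzygy analysis (the domain argument forcing $\alpha=0$ from the third coordinate, and the identification of the remaining equations with $\ker\varepsilon=\im\zeta$) is correct and arguably makes the appearance of $\zeta$ more transparent than the paper's lifting argument, at the cost of having to name explicit generators and the auxiliary line module $L'$.

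The one genuine gap is the step you flag yourself: that $N/fA$ is cyclic, generated in degree $2$ (equivalently, that $N$ is generated by one element in degree $1$ and one in degree $2$, so that $fa_1,fa_2,c$ generate $N_{\geq 2}$). Hilbert series alone cannot give this, and your sketched plan via $\QGr(B)\cong\Qch(Y)$ is not carried out; as written the proof is therefore incomplete at its load-bearing point. But this missing statement is precisely the content of \cite[Proposition 6.7]{ATV2}: the minimal resolution recorded there exhibits $N$ as the image of $A(-2)\oplus A(-1)$ with first syzygy module $A(-3)$, which immediately gives $N=fA+cA$ and $N/fA\cong (A/rA)(-2)$ with $r\in A_1$ nonzero (by Hilbert series), i.e.\ exactly the line module you need, including the relation $cr=fs$ behind your $\kappa_1$. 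So your argument closes completely once you replace the sheaf-theoretic plan with that citation --- which is also the paper's own starting point; the difference is that the paper then has no further need for $L'$ or the explicit $\kappa_i$.
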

\begin{proof}
From \cite[Proposition 6.7.]{ATV2} we know $P$ has the following (minimal) resolution:
\[ 0 \r A(-3) \r A(-2) \oplus A(-1) \r A \r P \r 0 \]
Combining this with the minimal resolution for $k$ we get the following diagram
\begin{center}
\begin{tikzpicture}
\matrix(m)[matrix of math nodes,
row sep=3em, column sep=3em,
text height=1.5ex, text depth=0.25ex]
{ & & 0 & & & \\
  & & k(-1) & & & \\
0 & A(-3) & A(-1) \oplus A(-2) & A & P & 0 \\
& & A(-2)^{\oplus 2} \oplus A(-2) & & & \\
& & A(-4)^{\oplus 2} & & & \\ 
& & A(-5) & & & \\
& & 0 & & & \\ };
\path[->,font=\scriptsize]
(m-3-1) edge (m-3-2)
(m-3-2) edge node[above]{$\alpha$} 
(m-3-3) edge node[below]{$\exists\eta$} (m-4-3)
(m-3-3) edge node[above]{$\beta$} (m-3-4)
edge node[auto]{$\gamma$} (m-2-3)
(m-3-4) edge node[above]{$\varphi$} (m-3-5)
(m-3-5) edge (m-3-6)
(m-2-3) edge (m-1-3)
(m-4-3) edge node[auto]{$\delta_0\oplus\Id$} (m-3-3)
(m-5-3) edge node[auto]{$(\varepsilon,0)$} (m-4-3)
(m-6-3) edge node[auto]{$\zeta$} (m-5-3)
(m-7-3) edge (m-6-3);
\end{tikzpicture}
\end{center}
Put $\delta=\delta_0\oplus \id$.
The
%$\delta$ is the identity on $A(-2) \rightarrow A(-2)$ and 
 existence of the map $\eta$ such that $\delta\circ \eta = \alpha$ follows from the projectivity of $A(-3)$
 and the fact that $\gamma\circ\alpha$ is zero by degree reasons.
By diagram chasing one easily finds that $\ker(\beta \circ \delta) = \im(\varepsilon) \oplus \im(\eta)$ and hence we end up with the following complex:
\[ 0 \rightarrow A(-5) \xrightarrow{\zeta\oplus 0} A(-4)^{\oplus 2} \oplus
A(-3) \xrightarrow{\eta \oplus \varepsilon} A(-2)^{\oplus 3}
\xrightarrow{\beta \circ \delta} A \xrightarrow{\varphi} P \rightarrow
0 \] Using diagram chasing again one easily checks that this complex
is exact everywhere except at $A$. We then conclude with a Hilbert series argument. 
%argument shows that the only non-exactness is in degree 1:
%$\dim_k(\im((\beta \circ \delta)_1)) = 0 = \dim_k(\ker(\varphi_1)) -
%1$ corresponding to the fact that $k(-1)$ somehow disappears in the
%construction of the complex.
\end{proof}
%
%We conclude that in case $(r,s)=(2,3)$ we have a right resolution for 
%
In a similar way as in the quadratic case we conclude that
$(a)\Dscr_{m,m+1}$ has a resolution of the form
\begin{equation}
\label{ref-9-9-2}
0 \r \Oscr_X(a-3) \xrightarrow{(\zeta,0)} \Oscr_X(a-2)^{\oplus 2} \oplus \Oscr_X(a-1) \r \Oscr_X(a)^{\oplus 3} \r (a)\Dscr_{m,m+1} \r 0 %\tag{$\circ  $}
\end{equation}
which is actually of the form
\begin{equation}
\label{ref-10-10-2}
 0 \r \Jscr(a) \oplus \Oscr_X(a-1) \r \Oscr_X(a) \otimes_k D_{m,m+1} \r(a)\Dscr_{m,m+1} \r 0 %\tag{$\bullet $}
\end{equation} 
where
\begin{equation}
\label{jscr}
\Jscr\overset{\text{def}}{=}\coker(\Oscr_X(-3) \xrightarrow{\zeta} \Oscr_X(-2)^{\oplus 2})
\end{equation}
We will now prove some vanishing results.
An object in $\Qch(X)$ will be said to have finite length if it is a finite extension
of objects of the form $\Oscr_p$, $p\in Y$. %One may show that this is equivalent \marginpar{XXX: More comments?}
%to the categorical definition.
Likewise an object in
$\Bimod(X-X)$ will be said to have finite length if it is a finite extension
of $o_p$ for $p\in Y$. The objects of finite length are fully understood, see \cite[Chapter 5]{VdB19}. Note that by \cite[Proposition 5.5.2]{VdB19} $o_p$
is a simple object in $\Bimod(X-X)$ so the Jordan-Holder theorem applies to finite length bimodules. 
\begin{lemma}
\label{vanishing}
A finite length object in $\Qch(X)$ has no higher cohomology.
\end{lemma}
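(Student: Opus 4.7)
The plan is to reduce the statement, via long exact cohomology sequences, to the vanishing $H^i(X,\Oscr_p)=0$ for $i\ge 1$ and $p\in Y$, and then to verify this by resolving $\Oscr_p$ by shifts of $\Oscr_X$ whose higher cohomology vanishes. By definition a finite-length object in $\Qch(X)$ admits a finite filtration with successive quotients of the form $\Oscr_p$, $p\in Y$, so an obvious induction on the length reduces the problem to the skyscraper case.

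For this reduced statement I would use the minimal free resolution of the point module $P_p$ over $A$. In the quadratic case a Hilbert-series computation gives
\[
0\r A(-2)\r A(-1)^{\oplus 2}\r A\r P_p\r 0,
\]
while in the cubic case one has, by \cite[Proposition 6.7]{ATV2},
\[
0\r A(-3)\r A(-2)\oplus A(-1)\r A\r P_p\r 0.
\]
Applying the exact quotient functor $\pi$ yields a finite resolution of $\Oscr_p$ by sums of shifts $\Oscr_X(n)$, with $n\in\{0,-1,-2\}$ in the quadratic case and $n\in\{0,-1,-2,-3\}$ in the cubic case.

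It then suffices to show $H^i(X,\Oscr_X(n))=0$ for $i\ge 1$ and each such $n$. For this I would invoke the Gorenstein property of $A$: via the standard dictionary between local and sheaf cohomology on $\Proj A$ of \cite{artinzhang}, one obtains $H^1(X,\Oscr_X(n))=0$ for every $n$, and $H^2(X,\Oscr_X(n))\cong (A_{-n-l})^\ast$, where $l$ is the Gorenstein parameter ($l=3$ in the quadratic case, $l=4$ in the cubic case). The latter vanishes precisely when $n>-l$, which accommodates all the twists appearing in the resolutions above. The hypercohomology spectral sequence of the resolution then collapses and $R\Gamma(X,\Oscr_p)$ is concentrated in degree zero, as required. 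The only nontrivial input is this cohomology of twists, but for AS-regular algebras this is routine, so no serious obstacle arises.
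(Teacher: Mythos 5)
Your argument is correct and structurally identical to the paper's: both reduce by d\'evissage (long exact sequences along the defining filtration) to the case of a single skyscraper $\Oscr_p$. The only difference is the base case --- the paper just cites \cite[Proposition 5.1.2]{VdB19} with $\Fscr=\Oscr_X$, whereas you reprove it by applying $\pi$ to the minimal resolution of the point module and using the standard computations $H^1(X,\Oscr_X(n))=0$ and $H^2(X,\Oscr_X(n))\cong (A_{-n-l})^{\ast}$ from \cite[Theorem 8.1]{artinzhang}; since the twists occurring in those resolutions all satisfy $n>-l$ (with $l=3$ quadratic, $l=4$ cubic), this self-contained substitute goes through.
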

\begin{proof} For an object of the form $o_p$ this follows from \cite[Proposition 5.1.2]{VdB19} 
with $\Fscr=\Oscr_X$. The general case follows from the long exact sequence
for $\Ext$. 
\end{proof}
\begin{lemma}
\label{ref-7.2-11}
$H^2(X,(-l)\Dscr_{m,n})=0$ for $l\le 2n-2m+s$. 
\end{lemma}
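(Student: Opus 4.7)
The plan is to exhibit $\Dscr_{m,n}$ as the bottom term of a finite filtration of $o_X(2(n-m))$ by $X$-bimodules with finite-length successive quotients, and then to propagate $H^2$-vanishing down the filtration using Lemma \ref{vanishing} together with Serre duality on the AS-regular scheme $X$.

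Concretely, for $0\le k\le n-m$ set
\[
F_k\overset{\text{def}}{=}o_X(-2m)\otimes_{o_X}\bigl(m_{\tau^{-m}d}\,m_{\tau^{-m-1}d}\cdots m_{\tau^{-m-k+1}d}\bigr)\otimes_{o_X} o_X(2n),
\]
with the empty product interpreted as $o_X$, so that $F_0=o_X(2(n-m))$ and $F_{n-m}=\Dscr_{m,n}$. Peeling off the last ideal factor exhibits $F_k/F_{k+1}$ as a quotient of $o_X(-2m)\otimes(m_{\tau^{-m}d}\cdots m_{\tau^{-m-k+1}d})\otimes o_{\tau^{-m-k}d}\otimes o_X(2n)$. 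Since $o_{\tau^{-m-k}d}$ is a finite extension of the simple bimodules $o_p$ ($p\in\tau^{-m-k}d$), and since tensoring with invertible shift bimodules and with sub-bimodules of $o_X$ preserves the finite-length property, each $F_k/F_{k+1}$ has finite length in $\Bimod(X-X)$. Applying $\Oscr_X(-l)\otimes_{o_X}-$ produces a descending filtration
\[
(-l)\Dscr_{m,n}=(-l)F_{n-m}\subset\cdots\subset(-l)F_0=\Oscr_X(2(n-m)-l)
\]
in $\Qch(X)$ whose successive quotients are finite-length objects, since each $\Oscr_X(-l)\otimes_{o_X}o_p$ is a shifted point module.

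By Lemma \ref{vanishing} these finite-length quotients have vanishing $H^1$ and $H^2$, so each short exact sequence $0\to(-l)F_{k+1}\to(-l)F_k\to(-l)(F_k/F_{k+1})\to 0$ yields an isomorphism $H^2(X,(-l)F_{k+1})\cong H^2(X,(-l)F_k)$. Iterating in $k$ gives
\[
H^2\bigl(X,(-l)\Dscr_{m,n}\bigr)\cong H^2\bigl(X,\Oscr_X(2(n-m)-l)\bigr),
\]
and by Serre duality on $X$ (whose Gorenstein parameter is $s+1$) the right-hand side vanishes precisely when $2(n-m)-l\ge -s$, that is $l\le 2(n-m)+s$. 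The main technical point is the finite-length verification for each $F_k/F_{k+1}$, which comes down to a careful analysis of how products of the ideal bimodules $m_{\tau^{-j}d}$ interact with the point-supported bimodules $o_{\tau^{-j}d}$; this can be handled in the same spirit as the verification that $m_d\in\Bimod(X-X)$ by iterating \cite[Corollary~5.5.6]{VdB19}.
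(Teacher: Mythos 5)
Your argument is correct and is essentially the paper's own proof: the paper simply observes that $(-l)\Dscr_{m,n}\subset\Oscr_X(2n-2m-l)$ has finite length cokernel and concludes from Lemma \ref{vanishing} and the standard vanishing $H^2(X,\Oscr_X(d))=0$ for $d\ge -s$. Your filtration by the partial products $F_k$ just makes the finite-length-cokernel claim explicit one ideal factor at a time, which is a reasonable (and in the spirit of \cite[Theorem 5.5.10]{VdB19}) way to justify that step.
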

\begin{proof} We only need to consider the case $n\ge m$.
This follows from the fact that $(-l)\Dscr_{m,n}\subset \Oscr_X(2n-2m-l)$
with finite length cokernel and from the standard vanishing properties on $\QGr(A)$
(see for example \cite[Theorem 8.1]{artinzhang}).
\end{proof}
\begin{lemma}
\label{ref-7.3-12}
$H^1(X,(a)\Dscr_{m,n})=0$ for $a\ge -s+1$.
\end{lemma}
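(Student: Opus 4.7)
The plan is to prove the vanishing by induction on $n - m \geq 0$, using the resolutions \eqref{ref-10-10} and \eqref{ref-10-10-2} of $(a)\Dscr_{m,m+1}$ together with the cohomological vanishing results already at our disposal. The base case $n = m$ reduces to $H^1(\Oscr_X(a)) = 0$ for $a \geq -s+1$, which is a standard consequence of AS-regularity for $\Proj$ of a $3$-dimensional AS-regular algebra (it drops out of the minimal resolution of $k$ over $A$ via local/Serre duality).

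For the inductive step with $n \geq m+1$, apply the right-exact functor $-\otimes_{o_X}\Dscr_{m+1,n}$ to the resolution of $(a)\Dscr_{m,m+1}$. In the quadratic case this produces a right-exact complex
\[
(a-1)\Dscr_{m+1,n}^{\oplus 2} \xrightarrow{f} (a)\Dscr_{m+1,n} \otimes_k D_{m,m+1} \xrightarrow{g} (a)\Dscr_{m,n} \to 0,
\]
with an analogous complex in the cubic case using \eqref{ref-10-10-2}. Write $C=\ker g$, $I=\im f$, $K=\ker f$. The short exact sequence
\[
0 \to C \to (a)\Dscr_{m+1,n}\otimes_k D_{m,m+1} \to (a)\Dscr_{m,n} \to 0
\]
gives a long exact sequence in cohomology, and by the inductive hypothesis $H^1((a)\Dscr_{m+1,n})=0$, so to obtain $H^1((a)\Dscr_{m,n})=0$ it suffices to show $H^2(C)=0$.

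To control $H^2(C)$, use the further short exact sequences $0 \to I \to C \to C/I \to 0$ and $0 \to K \to (a-1)\Dscr_{m+1,n}^{\oplus 2} \to I \to 0$. Lemma \ref{ref-7.2-11} gives $H^2((a-1)\Dscr_{m+1,n})=0$ in the required range (the inequality $1-a \leq 2(n-m-1)+s$ holds for $a \geq -s+1$ whenever $n-m \geq 2$; the edge case $n=m+1$ is handled directly from the resolution, where $\Dscr_{m+1,n}=o_X$ and the complex is honestly exact). Since $\Proj A$ has cohomological dimension $2$, all $H^3$ vanish identically.

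The main obstacle is the remaining pieces $K$ and $C/I$, which measure the failure of $-\otimes_{o_X}\Dscr_{m+1,n}$ to be left-exact on the resolution. The plan is to argue that both are finite-length objects in $\Qch(X)$, so that Lemma \ref{vanishing} yields $H^i(K) = H^i(C/I) = 0$ for $i \geq 1$, closing the induction. The finite-length claim should follow from the fact that $\Dscr_{m+1,n}$ sits inside the invertible bimodule $o_X(2(n-m-1))$ with finite-length cokernel (combining the commutation formulas $m_d \otimes o_X(1)=o_X(1)\otimes m_{\sigma d}$ with the local structure theory from \cite[Chapter 5]{VdB19}), so that the ``Tor defect'' of tensoring the resolution by $\Dscr_{m+1,n}$ is concentrated on the divisors $\tau^{-j}d$. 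Verifying this cleanly—i.e., carrying out the local bimodule analysis that identifies $K$ and $C/I$ as iterated extensions of point bimodules $o_p$—is the principal technical point.
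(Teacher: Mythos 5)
Your overall strategy --- induction on $n-m$, tensoring the resolution of $(a)\Dscr_{m,m+1}$ with $\Dscr_{m+1,n}$, and disposing of the error terms by showing they have finite length so that Lemma \ref{vanishing} applies --- is exactly the paper's strategy, and in the quadratic case your argument is essentially complete (the paper's only refinement is that $K=\ker f$ is not merely finite length but actually zero: it is a quotient of the finite-length object $\Tor_1^{o_X}((a)\Dscr_{m,m+1},\Dscr_{m+1,n})$ sitting inside the torsion-free object $(a-1)\Dscr_{m+1,n}^{\oplus 2}$, which is how the paper upgrades the right-exact complex to a genuine short exact sequence).

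The gap is in the cubic case, which you dismiss as ``analogous.'' There the first term of the tensored resolution \eqref{ref-10-10-2} is $\Jscr(a)\otimes_X\Dscr_{m+1,n}\oplus(a-1)\Dscr_{m+1,n}$, and your vanishing analysis only addresses the second summand. You still need $H^2(X,\Jscr(a)\otimes_X\Dscr_{m+1,n})=0$, and this is precisely where the boundary value $a=-s+1=-2$ breaks the naive estimate: resolving $\Jscr(a)$ by $0\to\Oscr_X(a-3)\to\Oscr_X(a-2)^{\oplus 2}\to\Jscr(a)\to 0$ reduces it to $H^2(X,(a-2)\Dscr_{m+1,n})$, and for $a=-2$, $n=m+1$ Lemma \ref{ref-7.2-11} does not apply ($l=4>2(n-m-1)+s=3$; indeed $H^2(X,\Oscr_X(-4))\cong A_0^*\neq 0$ for a cubic algebra, so this route genuinely fails rather than just being unverified). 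The paper's fix is a second, separate induction for $a=-2$ built on the point-module resolution $0\to\Oscr_X(-3)\to\Oscr_X(-2)\oplus\Oscr_X(-1)\to\Oscr_X\otimes_X m_q\to 0$ tensored with $\Dscr_{m+1,n}$, which reduces the claim to $H^1$ in degrees $-1,-2$ (induction plus the already-proved range) and $H^2$ in degree $-3$ (Lemma \ref{ref-7.2-11}). Alternatively one can salvage your argument by using the other presentation $0\to\Jscr(a)\to\Oscr_X(a)^{\oplus 2}\to\Oscr_X(a+1)\to 0$, which sandwiches $H^2(X,\Jscr(a)\otimes_X\Dscr_{m+1,n})$ between $H^1(X,(a+1)\Dscr_{m+1,n})$ (induction hypothesis) and $H^2(X,(a)\Dscr_{m+1,n})$ (Lemma \ref{ref-7.2-11}); but some such argument must be supplied --- as written, the hardest case of the lemma is the one your proof does not cover.
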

\begin{proof}
We only need to consider the case $n\ge m$.
The proof for $a \ge -1$ is similar in the cases $(r,s)=(2,3)$ and $(r,s)=(3,2)$ so we will give the proof for the first case as it is slightly longer. Afterwards we will consider the case $(r,s)=(2,3)$ and $a=-2$.

Suppose $(r,s)=(2,3)$ and $a \geq -1$.
We prove $H^1(X,(a)\Dscr_{m,n})=0$ by induction on $n-m$. As $(a)\Dscr_{m,m}=\Oscr_X(a)$ the base case follows from the standard vanishing on $X$.

For the induction step we proceed as follows:  From \cite[Theorem 5.5.10]{VdB19}
and the fact that $\Dscr_{m,n}\subset o_X(2n-2m)$ with finite length cokernel
we may deduce that the kernel of the obvious surjective map 
\[
\Dscr_{m,m+1}\otimes_X \Dscr_{m+1,n}\r \Dscr_{m,n}
\]
has finite length.
Using \cite[Lemma 8.2.1]{VdB19} we see that this remains the case if we left tensor with $\Oscr_X(a)$. Thus we obtain a short
exact sequence in $\Qch(X)$
\[
0\r\text{f.l.} \r (a)\Dscr_{m,m+1}\otimes_X \Dscr_{m+1,n}\r (a)\Dscr_{m,n}\r 0
\]
from which we find
$H^1(X,(a)\Dscr_{m,n})=H^1(X,(a)(\Dscr_{m,m+1}\otimes_X \Dscr_{m+1,n}))$
by Lemma \ref{vanishing}. From \eqref{ref-10-10-2} we obtain an exact sequence
\begin{align}
\label{version1}
 \Tor_1^{o_X}((a)\Dscr_{m,m+1},\Dscr_{m+1,n})\r  \Jscr(a)\otimes_X \Dscr_{m+1,n} \oplus (a-1)\Dscr_{m+1,n} &\r \\
\notag D_{m.m+1}\otimes_k (a)\Dscr_{m+1,n} \r(a)\Dscr_{m,m+1}\otimes_X \Dscr_{m+1,n} &\r 0 %\tag{$\bullet $}
\end{align}
One deduces again, for example using \cite[Theorem 5.5.10]{VdB19}, that 
$\Tor_1^{o_X}((a)\Dscr_{m,m+1},\Dscr_{m+1,n})$ has finite length.  
It is clear that $(a-1)\Dscr_{m+1,n}$ has no finite length subobjects. We claim this 
is the same for $\Jscr(a)\otimes_X \Dscr_{m+1,n}$. Indeed tensoring the short exact
sequence
\[
0\r \Jscr(a)\r \Oscr_X(a)^{\oplus 2}\r \Oscr_X(a+1)\r 0 
\]
on the right with $\Dscr_{m+1,n}$ and using $\Tor$-vanishing \cite[Theorem 8.2.1]{VdB19} we
obtain a short exact sequence
\begin{equation}
\label{short}
0\r \Jscr(a)\otimes_X \Dscr_{m+1,n}\r  (a)\Dscr_{m+1,n}^{\oplus 2}\r (a+1)\Dscr_{m+1,n}\r 0 
\end{equation}
Hence in particular $\Jscr(a)\otimes_X \Dscr_{m+1,n}\subset (a)\Dscr_{m+1,n}^{\oplus 2}$
is torsion free. We conclude that \eqref{version1} becomes in
fact a short exact sequence\\
\begin{equation}
\label{shortexact}
0\r  \Jscr(a)\otimes_X \Dscr_{m+1,n} \oplus (a-1)\Dscr_{m+1,n} \r \\
D_{m,m+1}\otimes_k (a)\Dscr_{m+1,n}\r(a)\Dscr_{m,m+1}\otimes_X \Dscr_{m+1,n} \r 0 %\tag{$\bullet $}
\end{equation}
We find that
$H^1(X,(a)(\Dscr_{m,m+1}\otimes_X\Dscr_{m+1,n}))$ is sandwiched between a
direct sum of copies of $H^1(X,(a)\Dscr_{m+1,n})$ ($= 0$ by the
induction hypothesis) and a direct sum of copies of
$H^2(X,\Jscr(a)\otimes_X \Dscr_{m+1,n})$. Now $H^2(X,\Jscr(a) \otimes_X\Dscr_{m+1,n})$ is trivial as well because it is sandwiched between a direct sum of copies of $H^2( (a-2) \Dscr_{m+1,n})$ ($=0$ by Lemma \ref{ref-7.2-11}) and $H^3(X,(a-3)\Dscr_{m+1,n})$ ($=0$ as $H^3(X,-)=0$).\\

We now prove $H^1(X,(-2)\Dscr_{m,n})=0$ when $(r,s)=(2,3)$. This can
also be done by induction on $n-m$. The case $n=m$ again follows from
the standard vanishing on $X$. For the induction step recall that for
any point $q$ there is an exact sequence:
\[ 0 \r \Oscr_X(-3) \r \Oscr_X(-2) \oplus \Oscr_X(-1) \r \Oscr_X \otimes_X m_q \r 0 \]
Applying $- \otimes_X \Dscr_{m+1,n}$ yields an exact sequence
\[ 0 \r (-3)\Dscr_{m+1,n} \r (-2)\Dscr_{m+1,n} \oplus (-1)\Dscr_{m+1,n} \r \Oscr_X \otimes_X m_q \otimes_X \Dscr_{m+1,n} \r 0 \]
where the injectivity of $(-3)\Dscr_{m+1,n} \r (-2)\Dscr_{m+1,n} \oplus (-1)\Dscr_{m+1,n}$ is a torsion/torsion free
argument as above in the derivation of \eqref{shortexact}.
%
%
%follows by considering the commuting diagram
%
%\begin{center}
%\begin{tikzpicture}
%\label{ref-tikz-surj}
%\matrix(m)[matrix of math nodes,
%row sep=3em, column sep=3em,
%text height=1.5ex, text depth=0.25ex]
%{ & 0 & 0 \\
% & (-3) \Dscr_{m+1,n} & (-2)\Dscr_{m+1,n} \oplus (-1)\Dscr_{m+1,n}\\
%0 & \Oscr_X(-3) & \Oscr_X(-2) \oplus \Oscr_X(-1) \\};
%\path[->,font=\scriptsize]
%(m-1-2) edge (m-2-2)
%(m-1-3) edge (m-2-3)
%(m-2-2) edge (m-2-3)
%        edge (m-3-2)
%(m-2-3) edge (m-3-3)        
%(m-3-1) edge (m-3-2)
%(m-3-2) edge (m-3-3);
%\end{tikzpicture}
%\end{center}
In particular we can consider a long exact sequence of cohomology groups. As in this sequence $H^1(X,(-2)\Dscr_{m,n})=H^1(X,m_{\tau^{-m-1}p}\otimes_X \Dscr_{m+1,n})$ is sandwiched between $H^1(X,(-2)\Dscr_{m+1,n}) \oplus H^1(X,(-1)\Dscr_{m+1,n})$ and $H^2(X,(-3)\Dscr_{m+1,n})$ we can conclude by the induction hypothesis, the case $a \geq -1$ which was already done and Lemma \ref{ref-7.2-11}.
\end{proof}
We may now draw some conclusions. %\marginpar{XXX Nagekeken tot hier}.
\begin{lemma} $D$ is generated in degree one.
\end{lemma}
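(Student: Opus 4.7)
The plan is to prove by induction on $n-m\ge 1$ that the multiplication map
\[
\mu_{m,n}: D_{m,m+1} \otimes_k D_{m+1,n} \to D_{m,n}
\]
is surjective; iterating this will show that $D$ is generated in degree one. The base case $n=m+1$ is tautological, so we may assume the statement for smaller values of $n-m$.

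First I would factor $\mu_{m,n}$ through the global sections of the tensor product, writing
\[
D_{m,m+1}\otimes_k D_{m+1,n} \longrightarrow \Gamma(X,\Dscr_{m,m+1}\otimes_X\Dscr_{m+1,n}) \longrightarrow D_{m,n}.
\]
The kernel of the underlying bimodule multiplication $\Dscr_{m,m+1}\otimes_X\Dscr_{m+1,n}\to \Dscr_{m,n}$ has finite length (as already invoked in the proof of Lemma \ref{ref-7.3-12}), and finite length objects have no $H^1$ by Lemma \ref{vanishing}; hence the long exact sequence shows the rightmost map is surjective.

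Next I would show that the first map is surjective. Tensoring the resolution \eqref{ref-10-10} (quadratic case) or \eqref{ref-10-10-2} (cubic case) on the right with $\Dscr_{m+1,n}$, and arguing just as in the derivation of \eqref{shortexact} (the $\Tor_1$-term has finite length while the relevant syzygy terms have no finite length subobjects, using \eqref{short} to handle $\Jscr \otimes_X \Dscr_{m+1,n}$), one produces a short exact sequence
\[
0 \to \Kscr \to D_{m,m+1}\otimes_k \Dscr_{m+1,n} \to \Dscr_{m,m+1}\otimes_X\Dscr_{m+1,n} \to 0,
\]
where $\Kscr = (-1)\Dscr_{m+1,n}^{\oplus 2}$ in the quadratic case and $\Kscr = \Jscr\otimes_X \Dscr_{m+1,n}\oplus (-1)\Dscr_{m+1,n}$ in the cubic case. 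It suffices to establish the vanishing $H^1(X,\Kscr)=0$.

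The vanishing $H^1(X,(-1)\Dscr_{m+1,n})=0$ is immediate from Lemma \ref{ref-7.3-12}, since $-1\ge -s+1$ for both $s\in\{2,3\}$. In the cubic case the remaining piece $H^1(X,\Jscr\otimes_X\Dscr_{m+1,n})$ is handled by tensoring the defining sequence \eqref{jscr}, i.e.\ $0\to\Oscr_X(-3)\to\Oscr_X(-2)^{\oplus 2}\to \Jscr\to 0$, with $\Dscr_{m+1,n}$ (the relevant $\Tor_1$ vanishes by \cite[Theorem 8.2.1]{VdB19}), whereupon the resulting short exact sequence sandwiches $H^1(X,\Jscr\otimes_X\Dscr_{m+1,n})$ between $H^1(X,(-2)\Dscr_{m+1,n})^{\oplus 2}$ (zero by Lemma \ref{ref-7.3-12}, which just applies at the boundary $a=-2=-s+1$) and $H^2(X,(-3)\Dscr_{m+1,n})$ (zero by Lemma \ref{ref-7.2-11}, since $3\le 2(n-m-1)+3$). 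The main obstacle is the cubic case, where the presence of the $\Jscr$-summand forces one to use the sharpness of Lemma \ref{ref-7.3-12} precisely at its boundary value; this is exactly the reason that the earlier cohomology estimates were set up with these optimal bounds.
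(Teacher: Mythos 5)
Your proposal is correct and follows essentially the same route as the paper: factor the multiplication through $\Gamma(X,\Dscr_{m,m+1}\otimes_X\Dscr_{m+1,n})$ using the finite-length kernel and Lemma \ref{vanishing}, then tensor the resolutions \eqref{ref-10-10}/\eqref{ref-10-10-2} with $\Dscr_{m+1,n}$ and reduce to the vanishings $H^1(X,(-1)\Dscr_{m+1,n})=0$, $H^1(X,(-2)\Dscr_{m+1,n})=0$ and $H^2(X,(-3)\Dscr_{m+1,n})=0$ supplied by Lemmas \ref{ref-7.3-12} and \ref{ref-7.2-11}. All the steps, including the torsion/torsion-free argument killing the $\Tor_1$ connecting map and the sandwich for $H^1(X,\Jscr\otimes_X\Dscr_{m+1,n})$, match the paper's proof.
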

\begin{proof} We need to show for $n>m$ that 
\[
\Gamma(X,\Dscr_{m,m+1})\otimes_k
  \Gamma(X,\Dscr_{m+1,n})\r \Gamma(X,\Dscr_{m,n})
\]
 is surjective. 

The kernel of $\Dscr_{m,m+1}\otimes_X \Dscr_{m+1,n}\r\Dscr_{m,n}$ has
finite length  whence by Lemma \ref{vanishing}:
$\Gamma(X,\Dscr_{m,m+1} \otimes_X
\Dscr_{m+1,n})\r \Gamma(X,\Dscr_{m,n})$ is surjective. 
 Hence it is sufficient to show that 
\[
\Gamma(X,\Dscr_{m, m+1}) \otimes_k \Gamma(X,\Dscr_{m+1,n})\r
\Gamma(X,\Dscr_{m,m+1}\otimes_X \Dscr_{m+1,n})
\]
is surjective. In case $(r,s)=(3,2)$ we tensor 
\eqref{ref-10-10} for $a=0$ on the right with $\Dscr_{m+1,n}$. This give
\[
\Tor_1^{o_X}((0)\Dscr_{m,m+1},\Dscr_{m+1,n})\r (-1)\Dscr_{m+1,n} \r D_{m,m+1}\otimes_k (0)\Dscr_{m+1,n} \r (0)\Dscr_{m,m+1}\otimes_X \Dscr_{m+1,n}\r 0 %\tag{$\ddag$}
\]
Since $\Tor_1^{o_X}((0)\Dscr_{m,m+1},\Dscr_{m+1,n})$ has finite length and $(-1)\Dscr_{m+1,n}$
has no finite length submodules the leftmost arrow is zero.\\
Hence we must show that
$H^1(X,(-1)\Dscr_{m+1,n})=0$. This follows from Lemma \ref{ref-7.3-12}.

In case $(r,s)=(2,3)$ by \eqref{ref-10-10-2} by a similar argument
this amounts to showing that  $H^1(X,(-1)\Dscr_{m+1,n})=0$ and  $H^1(X,\Jscr(0) \otimes_X
\Dscr_{m+1,n})=0$. The first of these claims follows from Lemma \ref{ref-7.3-12}. For the
second of these claim we invoke the definition of $\Jscr$ (see \eqref{jscr}). It follows
that we have to show
$H^1(X,(-2)\Dscr_{m+1,n})= 0$ and $H^2(X,(-3)\Dscr_{m+1,n})= 0$ and these 
are known to hold by Lemma \ref{ref-7.3-12} and Lemma \ref{ref-7.2-11}.
\end{proof}
Our next result is that $D$  has the ``correct'' Hilbert function. That
is
\begin{equation}
\dim D_{m,m+a}=
\begin{cases}
\frac{(a+1)(a+2)}{2}&\text{if $a\ge 0$}\\
0&\text{if $a<0$}
\end{cases}
\label{ref-11-13} 
%\tag{$\star$}
\end{equation}
The case $a<0$ is trivial so we consider $a\ge 0$.
For this we have to check the cases $(r,s)=(3,2)$ and $(r,s)=(2,3)$ separately. For the quadratic case a computation similar to \cite[Corollary\ 5.2.4]{VdB19} tells us that the colength of
$\Dscr_{m,m+a}$ inside $o_X(2a)$ is
\[
3\frac{a(a+1)}{2}
\]
Using the fact that $H^1(X,\Dscr_{m,n})=0$ by Lemma \ref{ref-7.3-12} we
obtain (for $a\ge 0$)
\[
\dim D_{m,m+a}= \frac{(2a+1)(2a+2)}{2}-3\frac{a(a+1)}{2} = \frac{(a+1)(a+2)}{2}
\]
Similarly in the cubic case the colength of
$\Dscr_{m,m+a}$ inside $o_X(2a)$ is
\[
\frac{a(a+1)}{2}
\]
and again using the fact that $H^1(X,\Dscr_{m,n})=0$ we
obtain (for $a\ge 0$)
\[
\dim D_{m,m+a}= \frac{(2a+2)^2}{4}-\frac{a(a+1)}{2} = \frac{(a+1)(a+2)}{2}
\]
Hence in both cases \eqref{ref-11-13} holds.

Finally we prove the following.
\begin{lemma}
The canonical map $D\r D_Y$ is surjective.
\end{lemma}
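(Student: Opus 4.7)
My plan is to reduce the problem to checking that the degree-one component $D_{m,m+1}\to(D_Y)_{m,m+1}$ is an isomorphism for each $m\in\ZZ$, and then to use that both $\ZZ$-algebras are generated in degree one to deduce surjectivity in all bi-degrees. The map $D\to D_Y$ itself arises by applying $\Gamma(X,-)$ to the bimodule surjection $\Dscr_{m,n}\twoheadrightarrow(\Dscr_Y)_{m,n}$ induced by the quotient $o_X\twoheadrightarrow o_Y$.

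For the degree-one isomorphism, I will use the bimodule commutation formulas established at the start of the previous section: they show that $(0)\Dscr_{m,m+1}$ embeds in $\Oscr_X(2)$ with finite-length cokernel. Taking global sections gives an inclusion $D_{m,m+1}\hookrightarrow A_2$, and analogously $(D_Y)_{m,m+1}\hookrightarrow B_2$; these inclusions are compatible with the reduction $A_2\to B_2$. Since $\deg g=s+1\ge 3>2$, the kernel $gA\cap A_2$ vanishes, so $A_2\to B_2$ is injective, and hence so is $D_{m,m+1}\to(D_Y)_{m,m+1}$. The Hilbert function \eqref{ref-11-13} gives $\dim D_{m,m+1}=3$, and by Riemann--Roch on the smooth genus-one curve $Y$ we have $\dim(D_Y)_{m,m+1}=h^0(Y,\Lscr_m)=\deg\Lscr_m=3$. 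An injection of equidimensional vector spaces is an isomorphism, which proves the claim in degree one.

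It remains to verify that $D_Y$ is generated in degree one (the corresponding statement for $D$ is the preceding lemma). By induction on $n-m$, this reduces to surjectivity of the pairwise multiplication $H^0(Y,\Lscr_m\otimes\cdots\otimes\Lscr_{n-2})\otimes H^0(Y,\Lscr_{n-1})\to H^0(Y,\Lscr_m\otimes\cdots\otimes\Lscr_{n-1})$ on $Y$. Since each $\Lscr_i$ has degree three on the smooth elliptic curve $Y$ and the helix condition $\Lscr_i\not\cong\Lscr_{i+1}$ has been verified earlier, this surjectivity is a standard application of the basepoint-free pencil trick (using either factor, depending on which has strictly larger degree, and the non-isomorphism hypothesis in the equal-degree case). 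The most delicate step of the whole argument is this verification that $D_Y$ is generated in degree one; by contrast, the degree-one isomorphism is essentially immediate once one observes that $g\in A_{s+1}$ sits in a degree strictly greater than two.
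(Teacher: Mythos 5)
Your argument is correct, and it follows the paper's overall strategy — reduce, via generation in degree one, to showing that $D_{m,m+1}\r (D_Y)_{m,m+1}$ is surjective — but it handles that key step by a different mechanism. The paper assembles a $3\times 3$ commutative diagram and applies the snake lemma to identify the kernel of $\Oscr_X\otimes_X m_d\r \Oscr_X\otimes_X m_{d,Y}$ as $\Oscr_X(-Y)$; after twisting by $o_X(2)$ this kernel becomes $\Oscr_X(-1)$ (using $o_X(-Y)=o_X(-3)$), and surjectivity on global sections follows from $H^1(X,\Oscr_X(-1))=0$. You instead prove injectivity of $D_{m,m+1}\r(D_Y)_{m,m+1}$ by embedding both sides compatibly into the reduction $A_2\r B_2$ and observing that $(gA)_2=0$ because $\deg g=s+1\ge 3$, and then conclude by a dimension count: $\dim D_{m,m+1}=3$ from the Hilbert function \eqref{ref-11-13}, and $\dim (D_Y)_{m,m+1}=h^0(Y,\Lscr_m)=\deg\Lscr_m=3$ by Riemann--Roch. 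This is legitimate because \eqref{ref-11-13} is established immediately before this lemma; note, though, that it itself rests on the $H^1$-vanishing of Lemma \ref{ref-7.3-12}, so the cohomological input is relocated rather than avoided, and your route is tied to the smooth elliptic case (where $h^0(\Lscr_m)=\deg\Lscr_m$ is automatic) while the paper's $H^1$-argument is more robust. As a bonus you actually prove a point the paper only cites from \cite{ATV1}, namely that $D_Y$ is generated in degree one, via the basepoint-free pencil trick; the needed non-isomorphism $\Lscr_i\not\cong\Lscr_{i+1}$ for all $i$ does follow from the verified case $i=0$ together with Lemma \ref{geometricdatalemma}, so that step is also sound.
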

\begin{proof}
  As $D$ and $D_Y$ are both generated in degree 1 (for $D_Y$ this is
  proved in the same way as for $B(Y,\sigma,\Lscr)$, see
  \cite{ATV1}), it suffices to check that $D_{m,m+1} \rightarrow
  (D_Y)_{m,m+1}$ is surjective.
%and that all relations holding in $D$ are mapped to relations holding in $D_Y$.%\\
%i.e. whenever $\sum_i f_i \otimes_k g_i \in \Ker (D_{m,m+1} \otimes_k D_{m+1,n}% \r D_{m,n})$ then\\
%$\sum_i (f_i)_Y \otimes (g_i)_Y \in \Ker (D_{Y,m,m+1} \otimes_k D_{Y,m+1,n} \r %D_{Y,m,n})$, where $(f_i)_Y, (g_i)_Y$ are the images of $f_i, g_i$ under the canonical map. This follows immediately from functoriality of $- \otimes_{o_X} o_Y$. 
%\\So we are only left to check the surjectivity of $D_{m,m+1} \rightarrow (D_Y)_{m,m+1}$ for all $m$. 
For this consider the following commuting diagram (with $\Oscr_X(-Y)=\Oscr_X\otimes_{o_X} o_X(-Y)$ the subobject of $\Oscr_X$ corresonding to the ideal
$gA\subset A$)
\begin{center}
\begin{tikzpicture}
\label{ref-tikz-surj}
\matrix(m)[matrix of math nodes,
row sep=3em, column sep=3em,
text height=1.5ex, text depth=0.25ex]
{ & 0 & 0 & 0 & \\
 & \Oscr_X(-Y) & \Oscr_X \otimes_X m_d & \Oscr_X \otimes_X m_{d,Y} & \\
0 & \Oscr_X(-Y) & \Oscr_X & \Oscr_Y & 0 \\
0 & 0 & \Oscr_d & \Oscr_d & 0 \\
 & 0 & 0 & 0 &  \\};
\path[->,font=\scriptsize]
(m-1-2) edge (m-2-2)
(m-1-3) edge (m-2-3)
(m-1-4) edge (m-2-4)
(m-3-1) edge (m-3-2)
(m-4-1) edge (m-4-2)
(m-2-2) edge (m-2-3)
 edge (m-3-2)
(m-2-3) edge (m-3-3)
 edge (m-2-4)
(m-2-4) edge (m-3-4) 
(m-3-2) edge (m-3-3)
 edge (m-4-2)
(m-3-4) edge (m-4-4)
 edge (m-3-5) 
(m-3-3) edge (m-3-4)
 edge (m-4-3)
(m-4-2) edge (m-5-2)
 edge (m-4-3)
(m-4-3) edge (m-5-3)
 edge (m-4-4)
(m-4-4) edge (m-5-4)
 edge (m-4-5);
\end{tikzpicture}
\end{center}
The bottom two rows and the first column are obviously exact. The third column is equal to
\[ 0 \rightarrow \Mscr_{d,Y} \rightarrow \Oscr_Y \rightarrow \Oscr_d \rightarrow 0 \]
and hence is exact. The exactness of the middle column follows as usual from \cite[Lemma 8.2.1]{VdB19}.
Hence we can apply the Snake lemma to the above diagram and find the following exact sequence:
\[ 0 \rightarrow \Oscr_X(-Y) \rightarrow \Oscr_X \otimes_X m_d \rightarrow \Oscr_X \otimes_X m_{d,Y} \rightarrow 0 \]
As the above obviously remains true when we replace $d$ by $\sigma^{-m}d$ and as $o_X(2)$ is an invertible bimodule we get an exact sequence
\[ 0 \rightarrow \Oscr_X(-Y) \otimes_X o_X(2) \rightarrow \Oscr_X
\otimes_X \Dscr_{m,m+1} \rightarrow \Oscr_X \otimes_X (\Dscr_Y)_{m,m+1}
\rightarrow 0 \] The surjectivity of $D_{m,m+1} \rightarrow
(D_Y)_{m,m+1}$ then follows from $H^1(X,\Oscr_X(-Y) \otimes_X o_X(2))=H^1(X, \Oscr_X(-1))=0$ using that
$o_X(-Y)=o_X(-3)$ (see \S\ref{secconstruction}) as well as the standard vanishing results for $X$
(see \cite[Theorem 8.1]{artinzhang}).
%
% and this follows from:
%\begin{eqnarray*}
%H^1(\Oscr_X, \Oscr_X(-Y) \otimes o_X(2) ) & = & \Ext^1_{\QGr(A)} (\pi(A), \pi((%gA)(2)) ) \\
% & \cong & \Ext^1_{\QGr(A)} (\pi(A), \pi(A(-1)) ) \\
% & \cong & \Ext^1_{\Gr(A)} (A, A(-1) ) \\
% & = & 0
%\end{eqnarray*}
\end{proof}
Since now the map $D\r D_Y$ is surjective, one checks using
\eqref{ref-11-13} that $D_{m,n}\r D_{Y,m,n}$ is an isomorphism
for $n\le m+2$. Thus $D$ and $D_Y$ have the same quadratic relations.
Let $D'$ be the quadratic AS-regular $\ZZ$-algebra associated to
$(Y,\Lscr_0,\Lscr_1)$ (see \S\ref{secremas}). Then since $D'$ is
quadratic, and has the same quadratic relations as $D_Y$ we obtain a
surjective map $D'\r D$.  Since $D'$ and $D$ have the same Hilbert
series by \eqref{ref-11-13} we obtain $D'\cong D$. Hence $D$ is the
quadratic AS-regular $\ZZ$-algebra associated to
$(Y,\Lscr_0,\Lscr_1)$.

\section{Non-commutative function fields}
As above let $A$ be a $3$-dimensional Sklyanin algebra, which may be either quadratic or cubic,
with geometric data $(Y,\sigma,\Lscr)$ and let $D$ be the AS-regular $\ZZ$-subalgebra
of $\check{A}^{(2)}$ constructed in \S\ref{secconstruction}.

Let $A'$ be the 3-dimensional quadratic Sklyanin algebra with geometric data
$(Y,\sigma,\Lscr_0)$ if $A$ is quadratic and $(Y,\psi,\Lscr_0)$ if $A$
is cubic where $(\Lscr_i)_i$ is as in \eqref{lidef}.

 By the discussion at the end of \S\ref{secremas}
together with Lemma \ref{geometricdatalemma} we conclude that $D\cong \check{A}'$.

%Given a Sklyanin algebra $A$ of  dimension 3 we were able to
%find a quadratic regular algebra $A'$ and an embedding of
%$\ZZ-$algebras: $\check{A'} \hookrightarrow \check{A}^{(2)}$. 

We will
now show that there is an isomorphism between  the
function fields of $\Proj A$ and $\Proj A'$. In the case that
$A$ is cubic this will be the final step in the proof of Theorem \ref{mainth1}.
If $A$ is quadratic then the relation between $A$ and $A'$ is a generalization
of the classical Cremona transform.

\medskip

By the graded version of Goldie's Theorem \cite[Corollary
8.4.6.]{nastasescumethods} the non-zero homogeneous elements of $A$
form an Ore set $S$ and hence the graded field of fractions
$A[S^{-1}]$ of $A$ exists. By the structure theorem for graded fields
\cite{NVO1} it is of the form
\[
\Frac(A) = \Frac_0(A) [t,t^{-1},\alpha]
\]
where $\Frac_0(A)$ is a division algebra concentrated in degree zero,
$|t|=1$ and $\alpha$ is an automorphism $\alpha: \Frac_0(A)
\rightarrow \Frac_0(A): a \mapsto tat^{-1}$. $\Frac_0(A)$ was
introduced in the introduction as ``the function field'' of $\Proj A$.
Our aim is to show that $\Frac_0(A)\cong \Frac_0(A')$.

It is straightforward to generalize the concept of an Ore set and its
corresponding localization to
$\ZZ$-algebras. In fact this is the classical concept of an Ore set
in a category (and its corresponding localization).

If $S\subset A$ is a multiplicative closed Ore set consisting of
homogeneous elements then one defines a corresponding multiplicative
closed Ore set $\check{S}\subset \check{A}$ by putting $\check{S}_{ij}=S_{j-i}$. 
%
%
%Recall that a $\ZZ$-algebra $C$can be interpreted as the collection of
%morphisms in a category $\Ascr$ whose objects are the integers and
%$\Hom_{\Ascr}(i,j) = \check{A}_{ji}$. If $S \subset A$ is a
%multiplicatively closed subset, then this gives rise to a
%multiplicatively closed subset $\check{S}$, which in turn can be
%intrepreted as a set of morphisms in $\Ascr$ closed under
%composition. If $S$ satisfies the right Ore-conditions for
%localization, then obviously the couple $(\Ascr,\check{S})$ allows a
%calculus of right fractions. In particular we can localize the
%category $\Ascr$ to $\Ascr[\check{S}^{-1}]$ whose objects are still
%given by $\ZZ$ but where the set of morphisms
%$\Hom_{\Ascr[\check{S}^{-1}]} (i,j)$ is the set of equivalence classes
%in
%\[ \{ a \circ s^{-1} \mid a \in \Hom_{\Ascr}(l,j), s \in
%\Hom_{\Ascr}(l,i) \cap \check{S} \textrm{ for some } l \in \ZZ \} \]
%under the usual equivalence relation and composition law, see for
%example \cite{calculusoffractions}. In particular the collections of
%morphisms in $\Ascr[\check{S}^{-1}]$ defines a $\ZZ-$algebra which we
%denote by $\check{A}[\check{S}^{-1}]$. 
A straightforward %but tedious
check yields $\widecheck{A[S^{-1}]} \cong \check{A}[\check{S}^{-1}]$.

Now let $S$ and $S'$ be the set of nonzero homogeneous elements in $A$
respectively $A'$. Then the inclusion $\check{A'} \hookrightarrow
\check{A}^{(2)} \hookrightarrow \check{A}$ restricts to $\check{S'}
\hookrightarrow \check{S}$ and hence for arbitrary $i \in \ZZ$ there is an
induced map $\zeta_i$:
\[ \Frac_0(A') = \left( A'[(S')^{-1}] \right)_0 \cong \left(
  \check{A'}[\check{S'}^{-1}] \right)_{ii} \rightarrow \left(
  \check{A}[\check{S}^{-1}] \right)_{2i,2i} \cong \left( A[S^{-1}]
\right)_0 = \Frac_0(A)\] 
Although %in most cases 
this map depends on
$i$  we will show that it is always
an isomorphism.

\medskip

As $\Frac_0(A')$ and $\Frac_0(A)$ are division rings and $\zeta_i \neq
0$, it follows that $\zeta_i$ is always injective, so the only nontrivial thing to do,
is proving its surjectivity. So given any $a,s \in
\check{A}_{2i,2j_1} \backslash \{0 \}$ we need to find a $j_2 \in \ZZ$
and $h \in \check{A}_{2j_1,2j_2}$ such that $ah, sh \in
\check{A'}_{i,j_2}$. We claim we can find such an $h$ only depending
on $n := j_1 - i$ and not on $a$ or $s$. For this consider the
following map:
\[ 
\Gamma \big( X,o_X(2n) \big) \otimes \Gamma \big( X,o_X(2N) \otimes_X \Iscr \big) \rightarrow \Gamma \big( X, o_X(2(n+N)\otimes_X \Iscr) \big) 
\]
where %$\Iscr = m_{ \sigma^{2i+2n+2N} \tau^{-i}d} \ldots m_{\sigma^{2i+2n+2N} \tau^{-(i+n+N)+1}d} \subset o_X$. 
$\Iscr$ is the ideal in $o_X$ such that $o_X(2(n+N)) \otimes_X \Iscr=\Dscr_{i_1,i_1+n+N}$ (see \eqref{case2}).

%In particular the image of the above map lies in $\Gamma \big( X, o_X(2(n+N) \otimes \Iscr) \big) = \Gamma(X,\Dscr_{i,i+n+N} \big) = \check{A'}_{i,i+n+N}$. 
If we can choose an $N$ such that 
\begin{equation}
\label{diamondsuit} \dim_k  \Gamma \big( X,o_X(2N) \otimes_X \Iscr \big)  \neq \{ 0 \} 
\end{equation}
then there is an element $0 \neq h \in \check{A}_{2i+2n,2i+2n+2N}$ and an embedding
\[ \check{A}_{2i,2i+2n} \hookrightarrow \check{A'}_{i,i+n+N}: a \mapsto
a h \] which yields the surjectivity of $\Frac_0(A') \rightarrow
\Frac_0(A)$ (as we may take $j_2=j_1+N=i+n+N$ in the above). So it suffices to prove \eqref{diamondsuit}. As the cases
$(r,s)=(3,2)$ and $(r,s)=(2,3)$ are completely similar, we only treat
the first case.

Note that the codimension of $\Gamma \big( X,o_X(2N) \otimes_X
\Iscr \big)$ inside $\check{A}_{2i_1+2n,2i+2n+2N}$ is at most $3
\frac{N(N+1)}{2}$ which grows like
%\footnote{(The upperbound
%  always holds but we expect $H^1(X, o_X(n+2N) \otimes \Iscr)$ to be
%  nonzero for most $n,N$, such that the codimension can be even less.}
$\frac{3N^2}{2}$.  On the other hand $\dim_k \left(
  \check{A}_{2i_1+2n,2i_1+2n+2N} \right) = \frac{(2N+1)(2N+2)}{2}$
which grows like $2N^2$, so for $N$ sufficiently large
\eqref{diamondsuit} will be fulfilled.

\section{Relation with non-commutative blowing up}
\label{secblowup}
%The true motivation lies in an ongoing process in understanding the
%blowup of a quasi-scheme $X$. 
For the interested reader we now sketch how the $\ZZ$-algebra $D$ which was introduced in a somewhat adhoc
manner in \S\ref{secconstruction} may be obtained in a natural way from the formalism of non-commutative
blowing up as introduced in \cite{VdB19}.

First let us remind the reader how the commutative Cremona transform works. Let $p_1,p_2,p_3\in \PP^2$ be three distinct non-collinear
points on $\PP^2$ and consider the linear system of quadrics passing through those points. This is a $5-3=2$ dimensional
linear system and hence it defines a birational transformation $\phi:\xymatrix@1{\PP^2\ar@{.>}[r]&\PP^2}$ with $\{p_1,p_2,p_3\}$
being the points of indeterminacy.

The indeterminacy of $\phi$ may be resolved by blowing up the points $\{p_1,p_2,p_3\}$. Let $\alpha:\tilde{X}\r \PP^2$ be the 
resulting surface and let $L_1,L_2,L_3$ be the exceptional curves. Then the Cremona transform factors as
\[
\xymatrix{
&\tilde{X}\ar[dr]\ar[dl]_{\alpha}\ \\
\PP^2\ar@{.>}[rr]_{\phi}&& \PP^2
}
\]
where the right most map is obtained from the  sections of the line bundle $\Oscr_{\tilde{X}}(1)=\alpha^\ast(\Oscr_{\PP^2}(2))\otimes_{\tilde{X}}\Oscr_{\tilde{X}}(-L_1-L_2-L_2)$
on $\tilde{X}$.

\medskip

Now we replace $\PP^2$ by the non-commutative $X$ given by $\Proj A$ where $A$ is a 3-dimensional quadratic Sklyanin algebra. 
We will use again the standard notation $Y,\Lscr,\sigma,p_1,p_2,p_3,d,\ldots$. According to \cite{VdB19}
we may blow up\footnote{In \cite{VdB19} we discuss the case of a blowup of a single
  point. Blowing up a set of points is similar.} $X$ in $d$ to obtain a map of non-commutative schemes $\alpha:\tilde{X}\r X$. Then we need a substitute for the line bundle  $\Oscr_{\tilde{X}}(1)$ on $\tilde{X}$.
Actually in the non-commutative case it is more natural to look for a substitute for the family of objects $(\Oscr_{\tilde{X}}(n))_n$ since then there is an associated $\ZZ$-algebra
\[
\bigoplus_{m,n}D_{m,n}=\bigoplus_{m,n}\Hom_{\tilde{X}}(\Oscr_{\tilde{X}}(-n),\Oscr_{\tilde{X}}(-m))
\]
This idea has been used mainly in the case that the sequence is ample in a suitable sense (e.g.\ \cite{Polishchuk}), but  the associated $\ZZ$-algebra may be defined
in general. Of course in the non-ample case the relation between the sequence and the underlying non-commutative scheme will be weaker.

\medskip

Let us now carry out this program in somewhat more detail. According to \cite{VdB19} we have $\tilde{X}=\Proj \Dscr$ where $\Dscr$ is a graded
algebra in $\Bimod(X-X)$ given by
\[
o_X\oplus m_d(Y)\oplus m_{d}m_{\tau^{-1}d}(2Y)\oplus\cdots\oplus m_{d}\cdots m_{\tau^{-n+1}d}(nY)\oplus\cdots 
\]
The inclusion $o_X\r \Dscr$ yields the map $\alpha:\tilde{X}\r X$.

Suitable noncommutative analogue of the objects $\Oscr_{\tilde{X}}(-mL_1-mL_2-mL_2)$ turn
out to be the objects in $\Bimod(X-\tilde{X})$ associated to the $o_X-\Dscr$-bimodules
given by
\[
m_{\tau^m d} \cdots m_{\tau d}\oplus m_{\tau^m d} \cdots m_d(Y)\oplus
m_{\tau^m d} \cdots m_{\tau^{-1}d}(2Y)\oplus\cdots \oplus m_{\tau^m d} \cdots  m_{\tau^{-n+1}d}(nY)\oplus\cdots 
\]
Up to right bounded $o_X-\Dscr$-bimodules (which are invisible in Proj) these are the same as
\[
(o_X(-mY)\otimes_X \Dscr)[m]
\]
where $[1]$ is the shift functor on $\Dscr$-modules (or bimodules, or variants thereof). 
So the non-commutative analogues of the objects $\Oscr_{\tilde{X}}(n)$ turn out to be
associated to 
\[
(\Oscr_X(2n-nY)\otimes_X \Dscr)[n]
\]
where we have written $O_X(a+bY)$ for $O_X(a)\otimes_X o_X(bY)$.

Or ultimately 
\[
\Oscr_{\tilde{X}}(n)=\alpha^\ast(\Oscr_X(2n-nY))[n]
\]

We now compute (the fourth equality requires an argument similar to \cite[Proposition 8.3.1(2)]{VdB19})
\begin{align*}
D_{m,n}&=\Hom_{\tilde{X}}(\Oscr_{\tilde{X}}(-n),\Oscr_{\tilde{X}}(-m))\\
 &= \Hom_{\tilde{X}}(\alpha^\ast(\Oscr_X(-2n+nY))[-n],
  \alpha^\ast(\Oscr_X(-2m+mY))[-m])\\
&=\Hom_{X}(\Oscr_X(-2n+nY),
  \alpha_\ast(\alpha^\ast(\Oscr_X(-2m+mY))[n-m]))\\
&=\Hom_{X}(\Oscr_X(-2n+nY),\Oscr_X(-2m+mY)\otimes_{o_X} \Dscr_{n-m})\\
  &=\Hom_{X}(\Oscr_X(-2n),\Oscr_X(-2m)m_{\tau^{-m}d}\cdots
  m_{\tau^{-n+1}d}))\\
  &=\Gamma(X,\Oscr_X(-2m)m_{\tau^{-m}d}\cdots m_{\tau^{-n+1}d} \otimes o_X(2n))
\end{align*}
Hence we find indeed the same result as in \S\ref{secconstruction}.
%
%
%In the case $Y$ is nonsingular at $p_1, p_2, p_3$ we have $Y = \hat{Y} = \tilde{Y}$ and a similar calculation as the above shows we can define $D_Y$ as
%\[ (D_Y)_{m,n} = \Hom_{\hat{Y}}(\Oscr_{\hat{Y}}(-n),\Oscr_{\hat{Y}}(-m)) \]
%In case $Y$ is singular at one of the $p_i$ we expect this definition to be the correct one to ensure the existence of a diagram similar to \eqref{ref-ABD} but with $Y$ replaced by $\hat{Y}$ and with anoth%er elliptic helix $\{\Lscr_i\}_i$.
\bibliographystyle{plain}
\bibliography{Veronesereferences}

\begin{thebibliography}{10}

\bibitem{artinschelter}
M.~Artin and W.F. Schelter.
\newblock Graded algebras of global dimension 3.
\newblock {\em Adv.Math}, 66:171--216, 1987.

\bibitem{ATV2}
M.~Artin, J.~Tate, and M.~Bergh.
\newblock Modules over regular algebras of dimension 3.
\newblock {\em Inventiones mathematicae}, 106(1):335--388, 1991.

\bibitem{ATV1}
M.~Artin, J.~Tate, and M.~{Van den Bergh}.
\newblock Some algebras associated to automorphisms of elliptic curves.
\newblock In P.~et~al. Cartier, editor, {\em The Grothendieck Festschrift},
  volume~1 of {\em Modern Birkhäuser Classics}, pages 33--85. Birkhäuser
  Boston, 1990.

\bibitem{AV}
M.~Artin and M.~Van~den Bergh.
\newblock Twisted homogeneous coordinate rings.
\newblock {\em Journal of Algebra}, 133(2):249--271, 1990.

\bibitem{artinzhang}
M.~Artin and J.J. Zhang.
\newblock Noncommutative projective schemes.
\newblock {\em Advances in Mathematics}, 109(2):228 -- 287, 1994.

\bibitem{Bondal}
A.~Bondal and A.~Polishchuk.
\newblock Homological properties of associative algebras: the method of
  helices.
\newblock {\em Russian Acad. Sci. Izv. Math}, 42:219--260, 1994.

\bibitem{Brandenburg}
M.~Brandenburg.
\newblock Rosenberg's reconstruction theorem (after gabber).
\newblock arXiv:1310.5978 [math.AG], 2013.

\bibitem{stephenson}
D.R.Stephenson.
\newblock {Artin Schelter Regular algebras of global dimension three}.
\newblock {\em Journal of Algebra}, 183:55--73, 1996.

\bibitem{EnochEstrada}
Edgar Enochs and Sergio Estrada.
\newblock Relative homological algebra in the category of quasi-coherent
  sheaves.
\newblock {\em Adv. Math.}, 194(2):284--295, 2005.

\bibitem{Gabriel}
Pierre Gabriel.
\newblock Des cat\'egories ab\'eliennes.
\newblock {\em Bull. Soc. Math. France}, 90:323--448, 1962.

\bibitem{nastasescumethods}
C.~Nastasescu and F.~van Oystaeyen.
\newblock {\em Methods of Graded Rings}, volume 1836 of {\em Lecture Notes in
  Mathematics}.
\newblock Springer, 2004.

\bibitem{NVO1}
Constantin Nastasescu and F.~Van~Oystaeyen.
\newblock {\em Graded and filtered rings and modules}.
\newblock Springer, Berlin, 1979.

\bibitem{Polishchuk}
A.~Polishchuk.
\newblock Noncommutative proj and coherent algebras.
\newblock {\em Math. Res. Lett.}, 12(1):63--74, 2005.

\bibitem{Classifyingorders}
D.~Rogalski, S.J. Sierra, and J.T. Stafford.
\newblock {Classifying orders in the Sklyanin algebra}.
\newblock arXiv:1308.2213, 2013.

\bibitem{Noncblowup}
D.~Rogalski, S.J. Sierra, and J.T. Stafford.
\newblock Noncommutative blowups of elliptic algebras.
\newblock {\em Algebras and Representation Theory}, pages 1--39, 2014.

\bibitem{rosenberg1}
A.~L. Rosenberg.
\newblock The spectrum of abelian categories and reconstruction of schemes.
\newblock In {\em Rings, Hopf algebras, and Brauer groups (Antwerp/Brussels,
  1996)}, pages 257--274. Dekker, New York, 1998.

\bibitem{Sierra}
S.~J. Sierra.
\newblock {$G$}-algebras, twistings, and equivalences of graded categories.
\newblock {\em Algebr. Represent. Theory}, 14(2):377--390, 2011.

\bibitem{Sierratalk}
S.J. Sierra.
\newblock {Talk: Ring-theoretic blowing down (joint work with Rogalski, D. and
  Stafford, J.T.)}.
\newblock Workshop Interactions between Algebraic Geometry and Noncommutative
  Algebra, 2014.

\bibitem{Smith}
S.~P. Smith.
\newblock {\em Non-commutative Algebraic Geometry}.
\newblock lecture notes. University of Washington, 2000.

\bibitem{VdBSt}
J.~T. Stafford and M.~Van~den Bergh.
\newblock Noncommutative curves and noncommutative surfaces.
\newblock {\em Bull. Amer. Math. Soc. (N.S.)}, 38(2):171--216, 2001.

\bibitem{Translation}
M.~Van~den Bergh.
\newblock {A Translation Principle for the Four-Dimensional Sklyanin Algebras
  }.
\newblock {\em Journal of Algebra}, 184(2):435 -- 490, 1996.

\bibitem{VdB19}
M.~{Van den Bergh}.
\newblock Blowing up non-commutative smooth surfaces.
\newblock {\em Mem. Amer. Math. Soc.}, 154(734), 2001.

\bibitem{VdB38}
M.~{Van den Bergh}.
\newblock Non-commutative quadrics.
\newblock {\em ArXiv e-prints}, 2008.

\end{thebibliography}
\end{document}